\newcommand{\Xomit}[1]{}
\newcommand{\ip}[2]{\langle #1 , #2 \rangle}    
\newcommand{\dmin}{\displaystyle\min}
\newcommand{\dmax}{\displaystyle\max}
\newcommand{\dsum}{\displaystyle\sum}
\newcommand{\R}{{\mathbb R}}
\renewcommand{\S}{{\mathfrak S}}
\newcommand{\sJ}{\mathcal S}
\newcommand{\B}{{\mathbb B}}
\newcommand{\affine}{{\mathrm{affine}}}
\newcommand{\dom}{{\mathrm{dom}}}
\newcommand{\dist}{{\mathrm{dist}}}
\newcommand{\graph}{{\mathrm{graph}}}
\newcommand{\ext}{{\mathrm{ext}}}
\newcommand{\vertiii}[1]{\| #1 \|}
\newcommand{\vertii}[1]{{\vert\kern-0.25ex\vert\kern-0.25ex\vert #1     \vert\kern-0.25ex\vert\kern-0.25ex\vert}}
\newcommand{\1}{{\mathbf 1}}
\newcommand{\ri}{{\mathrm{relint}}}
\DeclareMathOperator{\Image}{Im}
\DeclareMathOperator{\st}{s.t.}
\newtheorem{lemma}{Lemma}
\newtheorem{theorem}{Theorem}
\newtheorem{proposition}{Proposition}
\newtheorem{corollary}{Corollary}
\newtheorem{example}{Example}
\def\transp{^{\text{\sf T}}}
\newcommand{\T}{\mathcal T}
\newcommand{\F}{\mathcal F}
\renewcommand{\H}{\mathcal H}
\newcommand{\G}{\mathcal G}
\newcommand{\I}{\mathcal I}
\newcommand{\Id}{\text{\sf{I}}}
\newcommand{\matr}[1]{\begin{bmatrix} #1 \end{bmatrix}}    
\title{New characterizations of Hoffman constants for systems of linear constraints }
\author{Javier Pe\~na\thanks{Tepper School of Business,
Carnegie Mellon University, USA, {\tt jfp@andrew.cmu.edu}} \and Juan Vera\thanks{Department of Econometrics and
Operations Research,
Tilburg University, The Netherlands, {\tt j.c.veralizcano@uvt.nl}}
\and Luis F. Zuluaga\thanks{Department of Industrial and Systems Engineering, Lehigh University, USA, {\tt
luis.zuluaga@lehigh.edu}}}
\begin{document}

\maketitle

\begin{abstract}
We give a characterization of the Hoffman constant of a system of linear constraints in $\R^n$  {\em relative} to a {\em reference polyhedron} $R\subseteq\R^n$.  The reference polyhedron $R$ represents
constraints that are easy to satisfy such as box constraints.
In the special case $R = \R^n$, we obtain a novel characterization of the classical Hoffman constant.

More precisely,  suppose $R\subseteq \mathbb{R}^n$ is  a reference polyhedron, $A\in \R^{m\times n},$ and $A(R):=\{Ax: x\in R\}$. We characterize the
sharpest constant $H(A\vert R)$ such that for all $b \in A(R) + \R^m_+$ and $u\in R$
\[
\dist(u, P_{A}(b)\cap R) \le H(A\vert R) \cdot \|(Au-b)_+\|,
\]
where $P_A(b) = \{x\in \R^n:Ax\le b\}$.
Our characterization is stated in terms of the largest of a canonical collection of easily computable Hoffman
constants.  Our characterization in turn suggests new algorithmic procedures to compute  Hoffman constants.

\end{abstract}
\section{Introduction}
\label{sec.intro}

A classical result of Hoffman~\cite{Hoff52} shows that the distance from a point $u \in \R^n$ to a non-empty
polyhedron $P_{A}(b):=\{x \in \R^n: Ax \le b\}$ can be bounded above in terms of the size of the {\em residual} vector
$(Au-b)_+ := \max(0,Au-b)$.  More precisely, for $A \in \R^{m\times n}$ there exists a {\em Hoffman constant} $H(A)$
that depends only on $A$ such that for all $b\in A(\R^n) + \R^m_+$  and all $u\in \R^n$,
\begin{equation}
\label{eq:simple_erro}
\dist(u,P_{A}(b)) \le H(A) \cdot \|(Au-b)_+\|.
\end{equation}
Here $A(\R^n):=\{Ax: x\in \R^n\}$ and $\dist(u,P_{A}(b)) := \min\{\|u-x\|: x\in P_{A}(b)\}$. 
For convenience, we will make the following slight abuse of notation throughout the paper.  We will write $\|\cdot\|$ to denote both the norm in $\R^n$ and the norm in $\R^m$.  The specific norm will always be evident from the context.
The bound~\eqref{eq:simple_erro}  is a type of {\em error bound} for the system of inequalities $Ax \le b$, that is,
an inequality bounding the distance from a point  $u\in \R^n$ to a nonempty {\em solution set} in terms of a measure
of the {\em error} or {\em residual} of the point $u$ for the constraints defining the solution set.

We consider the following more general {\em relative} version of~\eqref{eq:simple_erro}.
Suppose $R\subseteq \R^n$ is a nonempty {\em reference polyhedron} and $A \in \R^{m\times n}$.  The reference
polyhedron $R$ represents some constraints that are easy to satisfy such as box constraints.   Let $A(R):=\{Ax: x\in R\}$.  We give a characterization of the sharpest {\em relative Hoffman constant} $H(A\vert R)$ that depends only on $(A,R)$ such that for all $b\in A(R) +
\R^m_+$ and all $u\in R$,
\begin{equation}
\label{eq:general_erro}
\dist(u,P_{A}(b)\cap R) \le H(A\vert R) \cdot \|(Au-b)_+\|.
\end{equation}
In the special case when $R = \R^n$ we have $H(A\vert R) = H(A)$ and obtain the following novel characterization of
$H(A)$:
\begin{equation}\label{eq.new.charac}
H(A) = \max_{J \in \sJ(A) }  \frac{1}{\dmin_{v\in \R^J_+, \|v\|^*=1}\|A_J\transp v\|^*},
\end{equation}
where $\sJ(A)$ is the collection of subsets $J\subseteq \{1,\dots,m\}$ such that $A_J(\R^n) + \R^J_+ = \R^J$, $A_J$ is the submatrix of $A$ defined by the rows indexed by $J$, and $\|\cdot\|^*$ denotes the dual norm of $\|\cdot\|$.  Observe that $A_J(\R^n) + \R^J_+ = \R^J$ if and only if $A_Jx < 0$ is feasible.

Hoffman bounds of the classical form~\eqref{eq:simple_erro}, the relative form~\eqref{eq:general_erro}, and more
general error bounds play a fundamental role in mathematical programming~\cite{Nguy17,Pang97,ZhouS17}.  In particular,
these kinds of Hoffman bounds as well as other related error bounds play a central role in establishing convergence
properties of a variety of modern convex optimization
algorithms~\cite{BeckS15,Garb18,GutmP19,LacoJ15,LeveL10,LuoT93,NecoNG18,PenaR16,WangL14}.  Hoffman bounds are also
used to measure the
optimality  and  feasibility  of  a  point  generated  by  rounding
an  optimal  point  of  the continuous  relaxation  of  a  mixed-integer  linear or quadratic
optimization problem~\cite{stein2016,granot1990}. Furthermore, Hoffman bounds are used in
sensitivity analysis~\cite{jourani2000}, and to design solution methods for non-convex quadratic
programs~\cite{xia2015}.

The relative format~\eqref{eq:general_erro} that includes a reference polyhedron arises naturally in various contexts.
For instance, it usually occurs when there are {\em box constraints} of the form $\ell \le x \le u$ as these constraints are generally easy to satisfy.   Our interest in
characterizing the Hoffman constant in the more general relative case that includes a reference polyhedron is
motivated by the recent articles~\cite{BeckS15,LacoJ15,Garb18,GutmP19,PenaR16,xia2015}.  In each  of these articles,
relative Hoffman constants for systems of linear constraints for suitable reference polyhedra play a central role in
establishing key properties of modern optimization algorithms.  In particular, the {\em facial distance} or {\em
pyramidal width} introduced in~\cite{LacoJ15,PenaR16} is precisely a relative Hoffman constant with the standard simplex as reference polyhedron.

The paper makes the following main contributions.  First, we develop novel characterizations of Hoffman constants for
systems of linear inequalities.  The characterization is stated as the largest of a canonical collection of easily computable Hoffman constants.    Our
characterization applies to the general case that includes both linear inequalities and linear equations, and a reference polyhedron representing constraints that
are easy to satisfy.
As a special case we obtain the new characterization~\eqref{eq.new.charac} for the classical Hoffman constant $H(A)$.  The Hoffman constant and our characterization of it also extend to the broader context of polyhedral set-valued mappings.

Second, throughout the paper we highlight the interesting and natural but mostly overlooked connection between the
Hoffman constant and Renegar's distance to ill-posedness~\cite{Rene95a,Rene95b}, which is a cornerstone of condition
measures in continuous optimization.  Our connection is along the lines of some developments by  Ramdas and Pe\~na~\cite{RamdP16}.  In particular, we detail the tight connection between the reciprocal of the
Hoffman constant $1/H(A)$ and Renegar's distance to ill-posedness for the system of linear inequalities $Ax < 0$.  We
also discuss similar interesting connections for other Hoffman constants.

Third, we leverage our characterizations of the relative Hoffman constants $H(A)$ and $H(A\vert R)$ to develop novel
algorithmic approaches to compute or estimate Hoffman constants.  We should note that the exact or even approximate
computation of the Hoffman constant is a notoriously difficult and largely unexplored computational challenge.  
The characterization~\eqref{eq.new.charac} suggests that while any $J\in \sJ(A)$ yields a  lower bound on $H(A)$, the typically more interesting task of computing a reasonable upper bound on $H(A)$ is far more challenging since the upper bound must hold for all $J\in \sJ(A)$.
Aside from the algorithmic procedures proposed in this paper, there appears to be only one other documented method
to compute the Hoffman constant, namely Algorithm {\tt ALG 2} proposed by Klatte and Thiere in~\cite{KlatT95}.  This
algorithm is based on the following popular characterization of $H(A)$ from~\cite{guler1995,KlatT95,WangL14}
\begin{equation}\label{eq.popular}
H(A) = \max_{J\subseteq \{1,\dots,m\}\atop A_J \text{ full row rank} }  \frac{1}{\dmin_{v\in \R^J_+,
\|v\|^*=1}\|A_J\transp v\|^*}.
\end{equation}
The characterization~\eqref{eq.popular} is often alluded to in the optimization literature as an expression for
computing $H(A)$.  Indeed, Algorithm {\tt ALG 2} of Klatte and Thiere in~\cite{KlatT95} is based on this
characterization.  It performs an exhaustive search over all $J\subseteq \{1,\dots,m\}$ to evaluate~\eqref{eq.popular}
which evidently is viable only for very small values of $m$.  A main limitation of~\eqref{eq.popular} is that it does
not take advantage of any structural properties of $A$.  As we discuss in Section~\ref{sec.algo}, it is possible to
compute $H(A)$ via a variant of~\eqref{eq.new.charac} that takes the maximum over a potentially much smaller
collection of subsets $\F\subseteq \sJ(A)$.  In the most favorable case when $A(\R^n) + \R^m_+ = \R^m$ we can take $\F =
\{\{1,\dots,m\}\}$ and thus $$H(A)=\frac{1}{\dmin_{v\in \R^m_+, \|v\|^*=1}\|A\transp v\|^*},$$ which can be computed
via a single and fairly tractable convex optimization problem for suitable choices of norms.

\medskip

The paper is entirely self-contained and relies only on standard convex optimization techniques.
Our results are related to a number of previous developments in the rich literature on error
bounds~\cite{AzeC02,burke1996,guler1995,Li93,MangS87,robinson1973,VanNT09,Zali03} and on condition measures for
continuous optimization~\cite{BurgC13,EpelF02,FreuV99a,FreuV03,Freu04,Lewi99,Pena00,Pena03,Rene95a,Rene95b}.  In
particular, the expressions for the Hoffman constants in Proposition~\ref{prop.Hoffman} and
Proposition~\ref{prop.Hoffman.alt} have appeared, albeit in slightly different form or under more restrictive assumptions,
in the work of  Klatte and Thiere~\cite{KlatT95}, Li~\cite{Li93},   Robinson~\cite{robinson1973}, and Wang and
Lin~\cite{WangL14}.  More precisely, Klatte and Thiere~\cite{KlatT95} state and prove a version of
Proposition~\ref{prop.Hoffman} under  the more restrictive assumption that $\R^n$ is endowed with the $\ell_2$ norm.
Li~\cite{Li93},   Robinson~\cite{robinson1973}, and Wang and Lin~\cite{WangL14} give characterizations of Hoffman
constants that are equivalent to Proposition~\ref{prop.Hoffman} and Proposition~\ref{prop.Hoffman.alt} but where the
maximum is taken over a different, and typically much larger, collection of index sets.
As we detail in Section~\ref{sec.special}, our characterization  for $H(A)$ in Proposition~\ref{prop.Hoffman} and Proposition~\ref{prop.Hoffman.alt} implies~\eqref{eq.popular} and can
readily be seen to be at least as sharp as some bounds on~$H(A)$ previously derived by G\"uler et al.~\cite{guler1995}, Burke
and Tseng~\cite{burke1996}, and Zhang~\cite{Zhan00}.  We also note that weaker versions of Theorem~\ref{thm.main.gral} can be obtained from
results on error bounds in Asplund spaces as those developed in the article by Van Ngai and Th{\'e}ra~\cite{VanNT09}.
Our goal to characterize a relative version of Hoffman constants that accounts for the presence of a reference
polyhedron is partly inspired by the concepts of relative smoothness, relative strong convexity, and relative continuity recently developed and used by Bauschke et al~\cite{BausBT16}, Lu~\cite{Lu17}, Lu et al~\cite{LuFN18}, and
Teboulle~\cite{Tebo18}.
Our characterization of the Hoffman constants is in the spirit of and draws on the seminal work by
Renegar~\cite{Rene95a,Rene95b} as well as related work by Freund and Vera~\cite{FreuV99a,FreuV03},
Pe\~na~\cite{Pena00,Pena03} and Lewis~\cite{Lewi99,Lewi05}.

The contents of the paper are organized as follows.  Section~\ref{sec.special} presents our main developments.  We
give a novel characterization of the classical Hoffman constant $H(A)$ (see Proposition~\ref{prop.Hoffman},
Proposition~\ref{prop.Hoffman.alt}, and Corollary~\ref{corol.sets}).  We also extend this characterization to the more
general case that includes linear inequalities, linear equations, and a reference polyhedron (see
Proposition~\ref{prop.Hoffman.gral.2}, Proposition~\ref{prop.Hoffman.gral.alt.2}, and Corollary~\ref{corol.ineq.2}).
Section~\ref{sec.algo}  describes several algorithmic procedures to compute Hoffman constants.
Section~\ref{sec.proof}, the most technical section of the paper, presents developments similar to those in Section~\ref{sec.special} but in the broader context
of polyhedral set-valued mappings (see Theorem~\ref{thm.main.gral}, Theorem~\ref{thm.main.gral.alt}, and
Corollary~\ref{corol.phi.sets}).  Finally, Section~\ref{sec.proof.props} details how the results in
Section~\ref{sec.special} follow from those in Section~\ref{sec.proof}.

Throughout the paper whenever we work with an Euclidean space $\R^d$, we will assume that it is endowed with a (non-necessarily Euclidean) norm
$\|\cdot\|$ and inner product $\ip{\cdot}{\cdot}$.  We will often use the dual norm  $\|\cdot\|^*$ of $\|\cdot\|$
defined as follows
\[
\|u\|^*:=\max_{\|x\|\le 1} \ip{u}{x}.
\]
Unless we explicitly state otherwise, our results  apply to arbitrary norms.

We will also rely on the following notation.  
Given a polyhedron $Q\subseteq \R^d$ let $\T(Q):=\{T_Q(u): u \in Q\}$
where $T_Q(u)$ denotes the {\em tangent cone} to $Q$ at $u\in Q$, that is,
\[
T_Q(u) = \{d \in \R^d: u + td \in Q \; \text{ for some } t > 0\}.
\]
Observe that since $Q$ is assumed to be a polyhedron, the collection of tangent cones $\mathcal T(Q)$ is finite.  Given a convex cone $K\subseteq\R^n$ we let $K^*$ denote its dual cone, that is,
\[
K^*:=\{u\in \R^n: \ip{u}{x} \ge 0 \text{ for all } x\in K\}.
\]
Throughout the paper we will write $[m]$ as shorthand for $\{1,\dots,m\}$.

\section{Hoffman constants for systems of linear constraints}
\label{sec.special}

This section describes a characterization for the Hoffman constant $H(A \vert R)$  in~\eqref{eq:general_erro} for systems of
linear inequalities
\[
\begin{array}{l}
Ax \le b\\
x\in R.
\end{array}
\]
We subsequently consider analogous Hoffman constants for systems of linear  inequalities and linear equations
\[
\begin{array}{l}
Ax \le b \\
Cx = d\\
x\in R.
\end{array}
\]
Although the latter case with inequalities and equations subsumes the former case, for exposition purposes we discuss separately the case with inequalities only.  Furthermore, we start with the special case $R=\R^n$.  
The notation and main ideas in this special case are simpler and easier to grasp. 
In particular, the crux of the characterization of $H(A)$  based on a canonical collection of submatrices of $A$ is more apparent.

We defer the proofs of the propositions in this section to Section~\ref{sec.proof.props}, where we show that they
follow from more general results for polyhedral set-valued mappings detailed in Section~\ref{sec.proof}.

\subsection{The case of inequalities only}
\label{sec.ineq}
Proposition~\ref{prop.Hoffman} below gives a  characterization of the {\em sharpest} Hoffman constant $H(A)$ such
that \eqref{eq:simple_erro} holds.
The characterization is stated in terms of a canonical collection of submatrices of $A$.

\medskip

Let  $A\in \R^{m\times n}$.  We next consider systems of linear inequalities of the form
\[
Ax \le b.
\]
Let $\sJ(A):= \{J \subseteq [m]: A_J(\R^n) + \R^J_+ = \R^J\}$ and
\begin{equation}\label{eq.Hoffman.matr}
H(A):=\dmax_{J\in \sJ(A)} H_J(A),
\end{equation}
where
\[
H_J(A):= \dmax_{y\in \R^m \atop \|y\|\le 1} \dmin_{x\in \R^n \atop A_Jx \le y_J} \|x\|
\]
for each $J\in \sJ(A)$.  By convention $H_J(A) := 0$ if $J = \emptyset$.

\medskip

Throughout the sequel, for $A\in \R^{m\times n}$ and $b\in \R^m$ we let $P_A(b)$ denote the polyhedron $\{x\in
\R^n: Ax \le b\}$.

\begin{proposition}\label{prop.Hoffman}
Let  $A\in \R^{m\times n}$.  Then for all $b \in A(\R^n) + \R^m_+$ and all $u\in \R^n$
\begin{equation}\label{eq.Hoffman.bound.matr}
\dist(u,P_{A}(b)) \le H(A)\cdot \dist(b-Au,\R^m_+) \le H(A)\cdot\|(Au-b)_+\|.
\end{equation}
Furthermore, the first bound in~\eqref{eq.Hoffman.bound.matr} is tight: If $H(A) > 0$ then there exist $b\in
A(\R^n)+\R^m_+$  and $u \in\R^n$ such that
\[
\dist(u,P_{A}(b)) = H(A)\cdot \dist(b-Au,\R^m_+) > 0.
\]
\end{proposition}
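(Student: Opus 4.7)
The plan is to prove~\eqref{eq.Hoffman.bound.matr} by locating, at the nearest point of $P_A(b)$ to $u$, a canonical set $J\in \sJ(A)$ of active rows and then invoking the defining property of $H_J(A)$; tightness is obtained by reversing this construction. The easy second inequality is immediate: choosing $s := (b-Au)_+ \in \R^m_+$ yields $\|s - (b-Au)\| = \|(Au-b)_+\|$, so $\dist(b-Au,\R^m_+) \le \|(Au-b)_+\|$.

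For the first inequality I assume $u\notin P_A(b)$ (otherwise both sides vanish) and let $x^*\in P_A(b)$ attain $\|x^*-u\| = \dist(u,P_A(b))$. KKT for the convex program $\min\{\|x-u\|: Ax\le b\}$ supplies a multiplier $\lambda^*\ge 0$ supported in $\{i: A_ix^* = b_i\}$ with $-A\transp\lambda^*\in\partial\|\cdot-u\|(x^*)$. I choose such $\lambda^*$ of minimal support $J$: a Carath\'eodory-style exchange --- perturbing $\lambda^*$ along any nonzero $\mu\in \R^J$ with $A_J\transp\mu = 0$ and taking the first step at which some coordinate of $\lambda^*+t\mu$ vanishes --- would strictly shrink the support while preserving nonnegativity and $A\transp\lambda^*$, contradicting minimality unless $\{A_i\transp: i\in J\}$ is linearly independent. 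Hence $A_J$ has full row rank, $A_J(\R^n)=\R^J$, and $J\in \sJ(A)$. Since $\lambda^*$ is supported on $J$, the pair $(x^*,\lambda^*_J)$ also solves the KKT conditions for the reduced problem $\min\{\|x-u\|: A_Jx\le b_J\}$, so $\dist(u,P_A(b)) = \min\{\|z\|: A_Jz\le (b-Au)_J\}$. Let $s^*\in\R^m_+$ attain $\dist(b-Au,\R^m_+)$ and set $w^* := s^* - (b-Au)$, so $\|w^*\| = \dist(b-Au,\R^m_+)$ and $-w^* \le b-Au$ componentwise. By the defining property of $H_J(A)$ in its homogeneous form $\min\{\|z\|: A_Jz\le y_J\}\le H_J(A)\|y\|$, applied to $y := -w^*$, there exists $z$ with $A_Jz\le -w^*_J\le (b-Au)_J$ and $\|z\|\le H_J(A)\|w^*\|$; this $z$ is feasible for the reduced problem, yielding
\[
\dist(u,P_A(b))\le H_J(A)\cdot\dist(b-Au,\R^m_+)\le H(A)\cdot\dist(b-Au,\R^m_+).
\]

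For tightness pick $J\in\sJ(A)$ with $H_J(A) = H(A) > 0$, $y^*\in\R^m$ with $\|y^*\|=1$ attaining the outer max in $H_J(A)$ (positive homogeneity of the inner min forces the optimizer to the boundary), and $x^*$ attaining $\min\{\|x\|: A_Jx\le y^*_J\}$, so $\|x^*\| = H(A)$. Set $u = 0$, $b_J := y^*_J$, and $b_i := \max(y^*_i, A_ix^*)$ for $i\notin J$. Then $x^*\in P_A(b)$, and every $x\in P_A(b)$ satisfies $A_Jx\le y^*_J$, forcing $\|x\|\ge H(A)$; hence $\dist(0,P_A(b)) = H(A)$. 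The vector $s := b - y^*$ lies in $\R^m_+$ (since $s_J = 0$ and $s_i = b_i - y^*_i \ge 0$) and satisfies $\|s-b\| = \|y^*\| = 1$, so $\dist(b,\R^m_+)\le 1$; combined with the upper bound just proved this gives $\dist(b,\R^m_+) = 1$, and equality $\dist(0,P_A(b)) = H(A)\cdot\dist(b,\R^m_+) > 0$ follows.

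The main obstacle is the minimal-support reduction: carrying out the Carath\'eodory-type exchange so as to trim $\lambda^*$ to a support on which $A_J$ has full row rank while simultaneously preserving nonnegativity and the subgradient identity $-A\transp\lambda^* \in \partial\|\cdot-u\|(x^*)$. Everything else follows directly from the definitions of $H_J(A)$ and $\sJ(A)$ together with standard convex duality and the positive homogeneity of the inner minimization.
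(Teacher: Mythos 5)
Your proof is correct, but it takes a genuinely different route from the paper. The paper obtains Proposition~\ref{prop.Hoffman} as a special case of Theorem~\ref{thm.main.gral} for polyhedral set-valued mappings: the upper bound comes from a contradiction argument that repeatedly takes a longest feasible step along the direction $d=(b-v)/\|b-v\|$ using tangent cones of $\graph(\Phi)$, and the key structural fact is Lemma~\ref{lemma.rel.surj} (the maximum of $\|\Phi_T^{-1}\|$ over all tangent cones equals the maximum over the relatively surjective ones), proved by a duality-plus-extreme-point argument. You instead argue directly in the inequalities-only, $R=\R^n$ setting: optimality conditions at the projection $x^*$ give a nonnegative multiplier supported on active rows, a conic Carath\'eodory exchange trims it to a support $J$ on which $A_J$ has full row rank (hence $J\in\sJ(A)$), and KKT sufficiency reduces $\dist(u,P_A(b))$ to the subsystem indexed by $J$, where the homogeneous bound $\min\{\|z\|:A_Jz\le y_J\}\le H_J(A)\|y\|$ applied to $y=s^*-(b-Au)$ finishes the estimate; your tightness construction ($u=0$, $b_J=y^*_J$, $b_i=\max(y^*_i,A_ix^*)$ off $J$, then using the already-proved inequality to force $\dist(b,\R^m_+)=1$) is a clean, hands-on specialization of the paper's. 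In effect your argument routes through full-row-rank index sets, close in spirit to the classical characterization~\eqref{eq.popular}, and then observes that such $J$ lie in $\sJ(A)$; this buys a shorter, self-contained and more elementary proof of this special case, while the paper's set-valued machinery buys uniformity, since the same theorem immediately yields the versions with equations, reference polyhedra, and general polyhedral mappings (Propositions~\ref{prop.Hoffman.rel}--\ref{prop.Hoffman.gral.alt.2}), where your active-set/KKT reduction would need to be reworked with tangent cones of $R$. Minor polish points, none fatal: note explicitly that $\lambda^*\ne 0$ when $u\notin P_A(b)$ (so $J\ne\emptyset$), and that the outer maximum defining $H_J(A)$ and the inner minima are attained because the value function $y\mapsto\min\{\|x\|:A_Jx\le y_J\}$ is finite-valued convex (hence continuous) when $J\in\sJ(A)$ and the unit ball is compact.
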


The following result complements Proposition~\ref{prop.Hoffman} and yields an alternative expression for $H(A)$.

\begin{proposition}\label{prop.Hoffman.alt} Let  $A\in \R^{m\times n}$. Then for all $J \in \sJ(A)$
\begin{equation}\label{eq.HJ}
H_J(A) = \dmax_{y\in \R^m\atop \|y\|\le 1}\dmin_{x\in \R^n \atop A_Jx \le y_J} \|x\| = \frac{1}{\dmin_{v \in \R^J_+,
\| v\|^*= 1} \|A_J\transp v\|^*},
\end{equation}
with the convention that the last denominator is  $+\infty$ and thus $H_J(A) = 0$ when $J=\emptyset.$ In particular,
\begin{equation}\label{eq.Hoffman.alt}
H(A) = \dmax_{J\in \sJ(A)} \frac{1}{\dmin_{v \in \R^J_+, \; \|v\|^* =1 } \|A_J\transp v\|^*}.
\end{equation}
\end{proposition}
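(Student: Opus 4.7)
The plan is to apply convex duality to the inner minimization $\dmin_{A_Jx \le y_J}\|x\|$ appearing in the definition~\eqref{eq.Hoffman.matr} of $H_J(A)$, then exchange the order of the two maximizations. The first equality in~\eqref{eq.HJ} is just the definition, so the content is the second equality; the case $J=\emptyset$ is handled by the stated convention, so I fix a nonempty $J\in\sJ(A)$.

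I would first unpack the condition $J\in\sJ(A)$ into two useful facts: since $A_J(\R^n)+\R^J_+ = \R^J$, for every $y_J\in\R^J$ the system $A_Jx \le y_J$ is feasible, and moreover there exists $\bar x$ with $A_J\bar x < 0$. The latter (combined with any feasible $x$) furnishes Slater's condition for the inner problem at every $y_J$, which together with coercivity of $\|\cdot\|$ yields strong duality and primal/dual attainment. Forming the Lagrangian $L(x,v) = \|x\| + \ip{v}{A_Jx-y_J}$ with $v\in\R^J_+$ and using the identity
\[
\dmin_{x\in\R^n}\big(\|x\|+\ip{u}{x}\big) = \begin{cases} 0 & \text{if } \|u\|^*\le 1,\\ -\infty & \text{otherwise,}\end{cases}
\]
applied to $u = A_J\transp v$, the inner problem admits the dual representation
\[
\dmin_{A_Jx \le y_J}\|x\| \;=\; \dmax_{v\in\R^J_+,\; \|A_J\transp v\|^*\le 1}\ip{v}{-y_J}.
\]

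Substituting into the definition of $H_J(A)$ and swapping the two maxima (always valid for $\sup\sup$) gives
\[
H_J(A) \;=\; \dmax_{v\in\R^J_+,\; \|A_J\transp v\|^*\le 1}\;\dmax_{y\in\R^m,\;\|y\|\le 1}\ip{v}{-y_J} \;=\; \dmax_{v\in\R^J_+,\; \|A_J\transp v\|^*\le 1}\|v\|^*,
\]
where the inner max over $\|y\|\le 1$ equals $\|v\|^*$ once $v\in\R^J$ is identified with its extension to $\R^m$ by zero entries. Positive homogeneity then rescales this to $1/\dmin_{v\in\R^J_+,\;\|v\|^*=1}\|A_J\transp v\|^*$, proving~\eqref{eq.HJ}. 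The expression~\eqref{eq.Hoffman.alt} is then obtained by substituting back into~\eqref{eq.Hoffman.matr}.

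The only nontrivial ingredient is strong duality with attainment for the inner minimization, which is supplied by Slater's condition (the signature property of $J\in\sJ(A)$) together with coercivity of $\|\cdot\|$. The rest is a routine duality-and-norm manipulation plus a $\sup$-$\sup$ swap.
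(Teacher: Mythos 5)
Your proof is correct. The core of your argument is in fact the same duality computation the paper uses, but you run it directly on the linear system rather than through the paper's general machinery: in the paper, Proposition~\ref{prop.Hoffman.alt} is obtained as the special case $C=[\;]$, $R=\R^n$ of Proposition~\ref{prop.Hoffman.gral.alt.2}, which is itself a specialization of Theorem~\ref{thm.main.gral.alt}; there the inner minimization $\min\{\|x\|:(x,y)\in T\}$ over the graph cone $T$ is dualized in~\eqref{primal.Hoffman}--\eqref{eq.norm.Hoffman}, the maximization over the unit ball of $y$ produces the dual norm (a projection $\Pi_{\Image(\Phi_T)}$ in general, which is the identity here since $J\in\sJ(A)$ makes the map surjective), and positive homogeneity gives the reciprocal form --- exactly your steps (ii)--(iii), with the upper adjoint condition $u\in\Phi_T^*(v)$ reducing to $u=A_J\transp v$, $v\in\R^J_+$, which is your Lagrangian dual. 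What you do differently is replace the conic duality on $\graph(\Phi_T)$ and its upper adjoint by an explicit Lagrangian dual with Slater's condition, which $J\in\sJ(A)$ supplies via a point $\bar x$ with $A_J\bar x<0$; this yields a short, self-contained proof of precisely this proposition and makes transparent where the hypothesis $J\in\sJ(A)$ enters (feasibility of $A_Jx\le y_J$ for every right-hand side plus strict feasibility). What the paper's route buys is uniformity: the same argument, done once for polyhedral sublinear mappings, immediately covers the relative constants $H(A\vert R)$, $H(A,C\vert R)$ and $\H(\Phi)$, and plugs into Lemma~\ref{lemma.rel.surj} and Theorem~\ref{thm.main.gral} for the tightness statements, none of which your specialized argument addresses (nor needs to for this proposition). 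One cosmetic remark: in the final rescaling step you implicitly use that $\dmin_{v\in\R^J_+,\|v\|^*=1}\|A_J\transp v\|^*>0$ for nonempty $J\in\sJ(A)$ (Gordan's theorem), or equivalently you should note the $1/0=+\infty$ convention; this is harmless but worth a sentence.
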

Proposition~\ref{prop.Hoffman.alt} implies that
\begin{align*}
H(A) &= \max_{J \in \sJ(A)} \max\{\|v\|^*: v \in\R^J_+, \|A_J\transp v\|^* \le 1\}\\
& = \max_{J \in \sJ(A)} \max\{\|\tilde v\|^*: \tilde v \in \ext\{v \in\R^J_+, \|A_J\transp v\|^* \le 1\}\},
\end{align*}
where $\ext(C)$ denotes the set of extreme points of a closed convex set $C$.  Thus the following bound on $H(A)$ previously established
in~\cite{burke1996,guler1995} readily follows:
\begin{align*}
H(A) &= \max_{J \in \sJ(A)} \max\{\|\tilde v\|^*: \tilde v \in \ext\{v \in\R^J_+, \|A_J\transp v\|^* \le 1\}\}
\\
&\le \max\{\|\tilde v\|^*: \tilde v \in \ext\{v \in\R^m_+, \|A\transp v\|^* \le 1\}\}.
\end{align*}

Furthermore, observe that if $J\in\sJ(A)$ and $\tilde v \in \ext\{v \in\R^J_+, \|A_J\transp v\|^* \le 1\}$ then $A_{J'}$ must have full row rank for $J' :=\{i:\tilde v_i > 0 \}\subseteq J.$ Therefore Proposition~\ref{prop.Hoffman.alt} also implies that 
\[
H(A) =\max_{J\subseteq \{1,\dots,m\}\atop A_J \text{ full row rank} } \max \{\| v\|^*:v\in \R^J_+, \|A_J\transp v\|^*\le 1 \}
 = \max_{J\subseteq \{1,\dots,m\}\atop A_J \text{ full row rank} }  \frac{1}{\dmin_{v\in \R^J_+,
\|v\|^*=1}\|A_J\transp v\|^*},
\] 
which is precisely the characterization~\eqref{eq.popular} of $H(A)$.
In addition, Proposition~\ref{prop.Hoffman.alt} 
implies the following bound on $H(A)$ in terms of the $\chi(A)$ condition measure~\cite{Stew89,Todd90,VavaY96} established in~\cite{Zhan00} for the special case when $A\in\R^{m\times n}$ is full column rank and both $\R^m$ and $\R^n$ are endowed with Euclidean norms:
\begin{align*}
H(A) &= \max_{J\subseteq \{1,\dots,m\}\atop A_J \text{ full row rank} } \max\{\|v\|: v \in\R^J_+, \|A_J\transp v\| \le 1\}\\
&= \max_{J\subseteq [m], |J| = n\atop A_J \text{ non-singular} } \max\{\|v\|: v \in\R^J_+, \|A_J\transp v\| \le 1\}\\
&\le \max_{J \subseteq [m], |J|=n\atop A_J \text{non-singular}} \max\{\|v\|: v \in\R^J, \|A_J\transp v\| \le 1\} \\
&= \max_{J \subseteq [m], |J|=n\atop A_J \text{non-singular}} \|A_J^{-1}\|\\
&=\chi(A).
\end{align*}
The last step follows from\cite[Prop. 3.7]{Zhan00}.

Observe that the above inequality could be fairly loose.  For example if
$
A = \matr{1 & -\epsilon\\ 1 & \;\;\epsilon}
$ 
for some small $\epsilon >0$, then $H(A) = 1+{\mathcal O}(\epsilon)$ whereas $\chi(A) = \|A^{-1}\| = \Omega(1/\epsilon)$.

\medskip

Proposition~\ref{prop.Hoffman.alt} also implies that the Hoffman constant $H(A)$ can be computed by maximizing over a
potentially much smaller collection $\F \subseteq \sJ(A)$ as stated in equation~\eqref{eq.Hoffman.alt.short} below.  The expressions~\eqref{eq.Hoffman.alt} and~\eqref{eq.Hoffman.alt.short} for $H(A)$  are at the heart of one of the algorithmic procedures for
computing $H(A)$ that we discuss in Section~\ref{sec.algo}.

\begin{corollary}\label{corol.sets}
Let $A \in \R^{m\times n}$.

\begin{description}
\item[(a)] If $A(\R^n) + \R^m_+ = \R^m$ then
\begin{equation}\label{eq.HA}
H(A) = \frac{1}{\dmin_{v \in \R^m_+, \; \|v\|^* =1 } \|A\transp v\|^*}.
\end{equation}
\item[(b)]
  Suppose $\F\subseteq \sJ(A)$ and $\I \subseteq 2^{[m]}\setminus \sJ(A)$
are such that for all $J \subseteq [m]$ either  $J\subseteq F$ for some $F \in \F,$
or $I\subseteq J$ for some $I \in \I.$   Then
\begin{equation}\label{eq.Hoffman.alt.short}
H(A) =  \dmax_{J \in \F}
H_J(A) =  \dmax_{J \in \F}  \frac{1}{\dmin_{v \in \R^J_+, \| v\|^*= 1} \|A_J\transp v\|^*}.
\end{equation}
\end{description}
\end{corollary}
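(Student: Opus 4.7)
The plan is to derive both parts directly from Proposition~\ref{prop.Hoffman.alt}, via two elementary structural facts about the family $\sJ(A)$ and the map $J \mapsto H_J(A)$. First I would establish a \emph{downward closure} property: if $J \subseteq J'$ and $J' \in \sJ(A)$, then $J \in \sJ(A)$. This is immediate from the characterization $J \in \sJ(A) \iff A_J x < 0$ is feasible, since a witness $x$ for $J'$ is automatically a witness for $J$. Second, I would establish \emph{monotonicity}: if $J \subseteq J'$ and $J' \in \sJ(A)$, then $H_J(A) \le H_{J'}(A)$. Using the primal expression $H_J(A) = \max_{\|y\|\le 1} \min_{A_J x \le y_J} \|x\|$, for any fixed $y$ the feasible set $\{x: A_{J'}x \le y_{J'}\}$ is contained in $\{x: A_J x \le y_J\}$, so the inner minimum is no smaller for $J'$ than for $J$; maximizing over $y$ preserves the inequality. (Downward closure ensures that both minima are finite.)

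For part (a), the assumption $A(\R^n) + \R^m_+ = \R^m$ is exactly the condition $[m] \in \sJ(A)$. By downward closure $\sJ(A) = 2^{[m]}$, and by monotonicity $H_J(A) \le H_{[m]}(A)$ for every $J \subseteq [m]$. Proposition~\ref{prop.Hoffman.alt} then collapses the maximum defining $H(A)$ to the single index set $[m]$, yielding the stated formula.

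For part (b), the inequality $\max_{J \in \F} H_J(A) \le H(A)$ is immediate from $\F \subseteq \sJ(A)$ and Proposition~\ref{prop.Hoffman.alt}. For the reverse inequality, fix any $J \in \sJ(A)$. The hypothesis on $(\F, \I)$ gives either $J \subseteq F$ for some $F \in \F$, or $I \subseteq J$ for some $I \in \I$. The second alternative is impossible: $I \subseteq J \in \sJ(A)$ would force $I \in \sJ(A)$ by downward closure, contradicting $I \in \I \subseteq 2^{[m]} \setminus \sJ(A)$. Hence $J \subseteq F$ for some $F \in \F$, and monotonicity gives $H_J(A) \le H_F(A) \le \max_{F' \in \F} H_{F'}(A)$. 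Taking the maximum over $J \in \sJ(A)$ completes the identity. The second equality in~\eqref{eq.Hoffman.alt.short} is just the dual expression for $H_F(A)$ from Proposition~\ref{prop.Hoffman.alt}. There is no real obstacle here; the argument is a short bookkeeping proof once the two monotonicity properties are recorded, and the only subtlety worth double-checking is that downward closure of $\sJ(A)$ is precisely what rules out the ``$I \subseteq J$'' branch.
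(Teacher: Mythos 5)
Your proof is correct and follows essentially the same route as the paper: the covering hypothesis plus downward closure of $\sJ(A)$ (via strict feasibility of $A_Jx<0$) rules out the ``$I\subseteq J$'' branch, and monotonicity $J\subseteq F \Rightarrow H_J(A)\le H_F(A)$ then collapses the maximum in Proposition~\ref{prop.Hoffman.alt} to $\F$. The only cosmetic difference is that you prove (a) directly while the paper obtains it from (b) with $\F=\{[m]\}$, $\I=\emptyset$, and you make explicit the downward-closure step that the paper leaves implicit.
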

\begin{proof}
\begin{description}
\item[(a)] This follows from part (b) applied to $\F = \{[m]\}$ and $\I = \emptyset$.
\item[(b)] The conditions on $\F$ and $\I$ imply that for all $J\in\sJ(A)$ there exists $F\in \F$ such that
    $J\subseteq F$.  The latter condition implies $H_J(A) \le H_F(A)$.  Therefore
\[
H(A) = \dmax_{J \in \sJ(A)} H_J(A)
= \dmax_{J \in \F} H_J(A)
= \dmax_{J \in \F}  \frac{1}{\dmin_{v \in \R^J_+, \| v\|^*= 1} \|A_J\transp v\|^*}.
\]
\end{description}
\end{proof}

The identity~\eqref{eq.HA} in Corollary~\ref{corol.sets} has the following geometric interpretation.  By Gordan's
Theorem, $Ax<0$ has a solution if and only if $A\transp v = 0, \, v \ge 0, \, v\ne 0$ does not.  Equivalently,
$ A(\R^n) + \R^m_+ = \R^m$ if and only if $0 \not\in \{A\transp v: v\ge 0, \, \|v\|^* = 1\}$.
When this is the case, the quantity $1/H(A)$ is precisely the distance (in the dual norm $\|\cdot\|^*$) from the
origin to $\{A\transp v: v\ge 0, \, \|v\|^* = 1\}$.  The latter quantity in turn equals the {\em distance to
non-surjectivity} of the mapping $x \mapsto Ax + \R^m_+$, that is, the norm of the smallest perturbation matrix
$\Delta A \in \R^{m\times n}$ such that $(A+\Delta A)(\R^n) + \R^m_+\ne \R^m$ as it is detailed in~\cite{Lewi99}.  This
distance to non-surjectivity is the same as Renegar's {\em distance to ill-posedness} of the system of linear
inequalities $Ax < 0$ defined by $A$. The more general identity~\eqref{eq.Hoffman.alt} in
Proposition~\ref{prop.Hoffman.alt} in turn can be interpreted as follows.  The quantity $1/H(A)$ is the smallest
distance to ill-posedness of the collection of the {\em feasible} systems of linear inequalities of the form $A_J x <
0$ for $J\subseteq[m]$.

 The distance to ill-posedness provides the main building block for Renegar's concept of {\em condition number} for
 convex optimization introduced in the seminal papers~\cite{Rene95a,Rene95b} that has been further extended in~\cite{AmelB12,BurgC13,EpelF02,FreuV99a,FreuV03,Freu04,Pena00,Pena03} among  many other articles.

\bigskip

Proposition~\ref{prop.Hoffman}, Proposition~\ref{prop.Hoffman.alt}, and Corollary~\ref{corol.sets}  extend to the more
general context when there is a reference polyhedron representing some constraints that are easy to satisfy.  More precisely, let
$R\subseteq \R^n$ be a reference polyhedron and $A\in \R^{m\times n}.$  Consider systems of the following form
\[
\begin{array}{r}
Ax \le b\\
x \in R,
\end{array}
\]
where $R$ represents a set of constraints that are easy to satisfy.
It is natural to consider a refinement of the Hoffman constant $H(A)$ that reflects the presence of these easy-to-satisfy
constraints.
To that end, let $\sJ(A \vert R)$ and $H(A\vert R)$ be the extensions of $\sJ(A)$ and $H(A)$ defined as follows.
\[
\sJ(A \vert R) := \{(J,K): J \subseteq [m], K\in \T(R) \text{ and } A_J(K) + \R^J_+ = \R^J\},
\]
and
\[
H(A\vert R) = \dmax_{(J,K)\in \sJ(A\vert R)} H_{J,K}(A),
\]
where
\[
H_{J,K}(A):= \dmax_{y\in \R^m \atop \|y\|\le 1} \dmin_{x\in K \atop A_Jx \le y_J} \|x\|.
\]
Once again, by convention $H_{J,K}(A) := 0$ if $J = \emptyset$.

We have the following analogue of Proposition~\ref{prop.Hoffman}.
\begin{proposition}\label{prop.Hoffman.rel}
Let  $A\in \R^{m\times n}$ and $R\subseteq \R^n$ be a reference polyhedron.  Then for all $b \in A(R) + \R^m_+$ and
$u\in R$
\[
\dist(u,P_A(b)\cap R) \le H(A\vert R)\cdot\dist(b-Au,\R^m_+) \le H(A\vert R)\cdot\|(Au-b)_+\|.
\]
Furthermore the first bound is tight: If $H(A\vert R) > 0$ then there exist $b \in A(R) + \R^m_+$ and $u \in R$ such
that
\[
\dist(u,P_A(b)\cap R) = H(A\vert R)\cdot\dist(b-Au,\R^m_+) > 0.
\]
\end{proposition}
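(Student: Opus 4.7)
The plan is to derive the proposition as a special case of the general Hoffman bound for polyhedral set-valued mappings that will be developed in Section~\ref{sec.proof} (Theorem~\ref{thm.main.gral}). The reduction applies the general theorem to the polyhedral set-valued mapping $\Phi\colon R \rightrightarrows \R^m$ defined by $\Phi(x) := Ax + \R^m_+$. Under this identification $\Phi^{-1}(b) = P_A(b) \cap R$ and the residual of $u \in R$ at $b$ is $\dist(b, \Phi(u)) = \dist(b - Au, \R^m_+)$, so the claimed first inequality is precisely the Hoffman bound for $\Phi$ at $(u, b)$. The general theorem expresses the sharpest such constant as a maximum of canonical quantities indexed by pairs $(J, K)$ with $J \subseteq [m]$, $K \in \T(R)$, and $A_J(K) + \R^J_+ = \R^J$; unpacking the definition of $\Phi$ identifies those quantities with $H_{J,K}(A)$ and their index set with $\sJ(A\vert R)$, yielding $H(A\vert R)$ as the maximum. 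The second inequality $\dist(b - Au, \R^m_+) \le \|(Au - b)_+\|$ is routine, obtained by taking $w := (b - Au)_+ \ge 0$ as a feasible candidate in the distance definition.

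A more direct argument for the first inequality would proceed through tangent cones at the projection. Given $u \in R$ and $b \in A(R) + \R^m_+$ with $u \notin P_A(b) \cap R$, let $x^*$ be the nearest point in $P_A(b) \cap R$ to $u$, set $J := \{i : A_i x^* = b_i\}$, and set $K := T_R(x^*)$. A KKT analysis at $x^*$ shows $(J, K) \in \sJ(A\vert R)$: the fact that $x^*$ is optimal for the projection with $u \neq x^*$ forces the active $A$-constraints in $J$ together with the tangent directions $K$ to admit a strictly feasible direction $d \in K$ with $A_J d < 0$, since otherwise no $R$-feasible motion at $x^*$ is obstructed by $A$-constraints, contradicting projection optimality. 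One then invokes the duality characterization of $H_{J,K}(A)$ paralleling Proposition~\ref{prop.Hoffman.alt} to extract a bound $\|x^* - u\| \le H_{J,K}(A) \cdot \|y\|$, where $y$ is a norm-optimal extension of $(Au - b)_J$ to $\R^m$ with $\|y\| = \dist(b - Au, \R^m_+)$.

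The main technical obstacle in a direct argument is pairing the distance $\|x^* - u\|$ with the inner minimization defining $H_{J,K}(A)$: the definition gives a bound on $\min\{\|d\| : d \in K,\, A_J d \le y_J\}$, yet a generic minimizer $d$ does not automatically satisfy $x^* + d \in R \cap P_A(b)$ since $K$ is only a tangent cone of $R$ at $x^*$. Overcoming this requires a duality argument identifying the primal minimizer of $\min\|d\|$ with (a scaling of) $x^* - u$, using the KKT conditions of the projection together with complementary slackness, or equivalently a scale-invariance argument that reduces the problem to infinitesimally small residuals. The general framework of Section~\ref{sec.proof} handles these subtleties uniformly in the set-valued mapping setting, which is why reducing to Theorem~\ref{thm.main.gral} is the cleanest route.

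For tightness: let $(J^\dagger, K^\dagger) \in \sJ(A\vert R)$ attain the maximum in $H(A\vert R)$, let $u^\dagger \in R$ satisfy $K^\dagger = T_R(u^\dagger)$, and let $y^\dagger$ with $\|y^\dagger\| = 1$ and $x^\dagger \in K^\dagger$ with $A_{J^\dagger} x^\dagger \le y^\dagger_{J^\dagger}$ attain the outer max and inner min defining $H_{J^\dagger, K^\dagger}(A)$, so $\|x^\dagger\| = H(A\vert R)$. Pick $\varepsilon > 0$ small enough that $u^\dagger + \varepsilon x^\dagger \in R$, legitimate since $x^\dagger \in T_R(u^\dagger)$; take $u := u^\dagger$ and define $b$ so that the violation coordinates on $J^\dagger$ are exactly $\varepsilon y^\dagger_{J^\dagger}$ (arranged so that $\dist(b-Au, \R^m_+) = \varepsilon$), while coordinates outside $J^\dagger$ are padded with enough slack to be inactive at the projection. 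Dual optimality of $(y^\dagger, x^\dagger)$ in the definition of $H_{J^\dagger, K^\dagger}(A)$ then forces the projection of $u$ onto $P_A(b) \cap R$ to equal $u^\dagger + \varepsilon x^\dagger$, yielding $\dist(u, P_A(b) \cap R) = \varepsilon \|x^\dagger\| = H(A\vert R) \cdot \dist(b - Au, \R^m_+)$, both positive by construction.
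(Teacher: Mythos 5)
Your main route is exactly the paper's: the paper proves this proposition as the $C=[\;]$ special case of Proposition~\ref{prop.Hoffman.gral.2}, which is itself obtained by applying Theorem~\ref{thm.main.gral} (including its tightness claim) to the polyhedral set-valued mapping $x\mapsto Ax+\R^m_+$ restricted to $R$, with the tangent cones of its graph identified with the pairs $(J,K)$ and relative surjectivity with $A_J(K)+\R^J_+=\R^J$, precisely as in your first paragraph. Your supplementary direct tightness construction mirrors the argument inside the paper's proof of Theorem~\ref{thm.main.gral} and is consistent with it, so the proposal is correct and essentially the same approach.
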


We also have the following analogue of Proposition~\ref{prop.Hoffman.alt} that provides an alternative expression for
$H(A\vert R)$.

\begin{proposition}\label{prop.Hoffman.rel.alt}
Let  $A\in \R^{m\times n}$ and $R\subseteq \R^n$ be a reference polyhedron.  Then for all $(J,K)\in \sJ(A\vert R)$
\begin{equation}\label{eq.HJ.rel}
H_{J,K}(A) = \max_{y\in \R^m \atop \|y\|\le 1} \min_{x\in K\atop A_Jx \le y_J} \|x\| = \frac{1}{\dmin_{v\in \R^J_+,\,
\|v\|^*=1 \atop A_J\transp v-u \in K^*} \|u\|^*}.
\end{equation}
In particular,
\begin{equation}\label{eq.Hoffman.alt.rel}
H(A\vert R)  = \max_{(J,K)\in \sJ(A\vert R)} \frac{1}{\dmin_{v\in \R^J_+,\, \|v\|^*=1 \atop A_J\transp v-u \in K^*}
\|u\|^*}.
\end{equation}
\end{proposition}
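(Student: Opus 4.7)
The first equality in~\eqref{eq.HJ.rel} is the definition of $H_{J,K}(A)$, and once~\eqref{eq.HJ.rel} is established the formula~\eqref{eq.Hoffman.alt.rel} follows by taking $\max$ over $(J,K)\in\sJ(A\vert R)$ in the definition $H(A\vert R)=\max_{(J,K)\in\sJ(A\vert R)}H_{J,K}(A)$. Hence the real content is the identity
\[
\max_{y\in\R^m,\,\|y\|\le 1}\ \min_{x\in K,\,A_Jx\le y_J}\|x\| \;=\; \frac{1}{\displaystyle\min_{v\in\R^J_+,\,\|v\|^*=1,\,A_J\transp v-u\in K^*}\|u\|^*}.
\]
My plan is to dualize the inner minimization, swap the two maxima, and then rescale and substitute.

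For fixed $y$ with $\|y\|\le 1$ the inner problem is a polyhedrally constrained convex program for the objective $\|\cdot\|$. Introducing multipliers $v\in\R^J_+$ for $A_Jx\le y_J$ and $w\in K^*$ for the conic constraint $x\in K$, the Lagrangian is $\|x\|+\langle v,A_Jx-y_J\rangle-\langle w,x\rangle$, and an elementary computation gives
\[
\inf_{x\in\R^n}\bigl(\|x\|+\langle A_J\transp v-w,x\rangle\bigr) \;=\; \begin{cases} 0 & \text{if } \|A_J\transp v-w\|^*\le 1,\\ -\infty & \text{otherwise.}\end{cases}
\]
The hypothesis $(J,K)\in\sJ(A\vert R)$, i.e.\ $A_J(K)+\R^J_+=\R^J$, makes the primal feasible for every right-hand side, and combined with polyhedrality of the constraints this yields strong Lagrangian duality:
\[
\min_{x\in K,\,A_Jx\le y_J}\|x\| \;=\; \max\bigl\{-\langle v,y_J\rangle : v\in\R^J_+,\,w\in K^*,\,\|A_J\transp v-w\|^*\le 1\bigr\}.
\]

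Taking $\max_{\|y\|\le 1}$ on both sides and swapping the order of the two maxima (using $\max_{\|y\|\le 1}\langle-v,y_J\rangle=\|v\|^*$, where $\|\cdot\|^*$ on $\R^J$ is the dual norm induced via the zero-extension $\R^J\hookrightarrow\R^m$) gives
\[
H_{J,K}(A) \;=\; \max\bigl\{\|v\|^* : v\in\R^J_+,\,w\in K^*,\,\|A_J\transp v-w\|^*\le 1\bigr\}.
\]
The feasible set is positively homogeneous: scaling $(v,w)\mapsto(tv,tw)$ multiplies both the constraint norm and the objective by $t$, so the supremum is attained when $\|A_J\transp v-w\|^*=1$. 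Equivalently, one may normalize $\|v\|^*=1$ and then read off $H_{J,K}(A)$ as the reciprocal of the minimum of $\|A_J\transp v-w\|^*$. Finally, the substitution $u:=A_J\transp v-w$ converts $w\in K^*$ into $A_J\transp v-u\in K^*$ and $\|A_J\transp v-w\|^*$ into $\|u\|^*$, yielding the claimed expression.

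The principal obstacle is justifying strong duality cleanly for a convex (not linear) program whose objective is an arbitrary norm. The hypothesis $(J,K)\in\sJ(A\vert R)$ is exactly the Farkas-type condition ensuring that no nonzero $v\in\R^J_+$ satisfies $A_J\transp v\in K^*$, so the minimum $\|u\|^*$ on the right-hand side is strictly positive and the reciprocal is well defined; together with polyhedrality of the constraints this is what underwrites the duality step. A secondary subtlety is keeping track of the embedding $\R^J\hookrightarrow\R^m$ so that the identity $\max_{\|y\|\le 1}\langle-v,y_J\rangle=\|v\|^*$ is the correct one under the paper's norm conventions.
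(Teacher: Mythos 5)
Your argument is correct, and its core computation is the same convex duality that drives the paper, but you package it quite differently: the paper never proves Proposition~\ref{prop.Hoffman.rel.alt} directly. Instead it obtains it as the special case $C=[\;]$ of Proposition~\ref{prop.Hoffman.gral.alt.2}, which in turn is read off from Theorem~\ref{thm.main.gral.alt} for polyhedral sublinear mappings; there the duality is performed once and for all on the graph cone $T$ (the passage from \eqref{primal.Hoffman} to \eqref{dual.Hoffman} and \eqref{eq.norm.Hoffman}), with the upper adjoint $\Phi_T^*$ playing the role of your pair $(v,w)\in\R^J_+\times K^*$ and an orthogonal projection onto $\Image(\Phi_T)$ inserted because the general theorem only assumes relative surjectivity. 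Your direct Lagrangian treatment of $\min\{\|x\|: x\in K,\ A_Jx\le y_J\}$, followed by the swap of maxima, the homogeneity rescaling, and the substitution $u=A_J\transp v-w$, is a legitimate self-contained shortcut: in the inequality-only setting $\Phi_{T_{J,K}}$ is genuinely surjective, so no projection is needed, and the hypothesis $A_J(K)+\R^J_+=\R^J$ gives exactly the Farkas-type fact you invoke (no nonzero $v\in\R^J_+$ with $A_J\transp v\in K^*$), which is what makes the rescaled maximum finite, attained on the sphere $\|A_J\transp v-w\|^*=1$, and the reciprocal well defined. Two small points if you write this up carefully: for the strong-duality step, feasibility at the given $y$ together with finiteness of the value (the objective is a norm, hence the value is $\ge 0$) and polyhedrality of the constraints already suffices, so feasibility for every right-hand side is more than you need; and the attainment of the outer maximum over $(v,w)$ deserves one line (boundedness of maximizing sequences again follows from your Farkas observation). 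What the paper's heavier route buys is generality: the single computation in Theorem~\ref{thm.main.gral.alt}, combined with Lemma~\ref{lemma.rel.surj} and the mapping \eqref{eq.def.phi}, simultaneously delivers the versions with equations and for arbitrary polyhedral set-valued mappings, whereas your argument would have to be redone (with the projection onto $C(K)$) for those cases.
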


We also have the following analogue of Corollary~\ref{corol.sets}.

\begin{corollary}\label{corol.Hoffman.rel}
Let $A \in \R^{m\times n}$ and $R\subseteq\R^n$ be a reference polyhedron.

\begin{description}
\item[(a)] If $R$ is a cone and $A(R) + \R^m_+ = \R^m$ then
\begin{equation}\label{eq.HA.eqns}
H(A\vert R) =
\max_{y\in \R^m \atop \|y\|\le 1} \min_{x\in R\atop Ax \le y} \|x\| = \frac{1}{\dmin_{v\in \R^m_+,\, \|v\|^*=1 \atop
A\transp v-u \in R^*} \|u\|^*}.
\end{equation}
\item[(b)]
  Suppose $\F\subseteq \sJ(A\vert R)$ and $\I \subseteq 2^{[m]}\times\T(R)\setminus \sJ(A\vert R)$
are such that for all $(J,K)\in 2^{[m]}\times\T(R)$ either  $J\subseteq F$ and $T \subseteq K$ for some
$(F,T) \in \F,$
or $I\subseteq J$ and $K\subseteq U$ for some $(I,U) \in \I.$   Then
\[
H(A\vert R) =  \dmax_{(J,K) \in \F} \frac{1}{\dmin_{v\in \R^J_+,\|v\|^*=1\atop A\transp v-u \in K^*} \|u\|^*}.
\]
\end{description}
\end{corollary}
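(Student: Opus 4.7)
The plan is to mirror the brief proof of Corollary~\ref{corol.sets}, adding the tangent-cone bookkeeping. The two key ingredients I would establish first are (i) a monotonicity property of $H_{J,K}(A)$: if $J\subseteq F$ and $T\subseteq K$ then $H_{J,K}(A)\le H_{F,T}(A)$, and (ii) a matching monotonicity of membership in $\sJ(A\vert R)$: if $(J,K)\in \sJ(A\vert R)$, $I\subseteq J$, and $K\subseteq U\in\T(R)$, then $(I,U)\in \sJ(A\vert R)$. For (i), observe that for every $y$ the feasible set $\{x\in T: A_Fx\le y_F\}$ is contained in $\{x\in K: A_Jx\le y_J\}$, so the inner minimum for $(J,K)$ is no larger than that for $(F,T)$. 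For (ii), $A_J(K)+\R^J_+=\R^J$ projects onto coordinates in $I\subseteq J$ to give $A_I(K)+\R^I_+=\R^I$, and replacing $K$ by a larger set only enlarges $A_I(\cdot)+\R^I_+$, which therefore still equals $\R^I$.

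With these in hand, part (b) is immediate. Given any $(J,K)\in \sJ(A\vert R)$, the hypothesis on $\F$ and $\I$ leaves two options. If $I\subseteq J$ and $K\subseteq U$ for some $(I,U)\in \I$, then (ii) would force $(I,U)\in \sJ(A\vert R)$, contradicting $\I\cap \sJ(A\vert R)=\emptyset$. Hence there must exist $(F,T)\in \F$ with $J\subseteq F$ and $T\subseteq K$, and (i) yields $H_{J,K}(A)\le H_{F,T}(A)$. Taking the maximum over $(J,K)\in \sJ(A\vert R)$ and using $\F\subseteq \sJ(A\vert R)$ for the reverse inequality gives
\[
H(A\vert R)=\max_{(F,T)\in \F}H_{F,T}(A),
\]
and substituting the dual expression for $H_{F,T}(A)$ from Proposition~\ref{prop.Hoffman.rel.alt} finishes (b).

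Part (a) is then just the specialization $\F=\{([m],R)\}$, $\I=\emptyset$. The verification that the hypothesis of (b) holds reduces to two small facts: first, $([m],R)\in \sJ(A\vert R)$, which holds because $R$ is a cone, so $R=T_R(0)\in\T(R)$, together with the hypothesis $A(R)+\R^m_+=\R^m$; and second, $R\subseteq K$ for every $K\in\T(R)$, which is where one uses that $R$ is a convex cone: for any $u\in R$ and $r\in R$, the sum $u+tr$ lies in $R$ for every $t\ge 0$ since $R$ is closed under nonnegative addition, so $r\in T_R(u)$. Given these, the formula in (a) is exactly the $(F,T)=([m],R)$ instance of the formula in (b). The only mildly delicate point of the whole argument is this last inclusion $R\subseteq T_R(u)$, which fails without the cone hypothesis and is what pins down the special form of part (a).
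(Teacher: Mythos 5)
Your proof is correct and follows essentially the same route the paper takes for the unrelativized analogue, Corollary~\ref{corol.sets}: part (b) via the covering hypothesis plus monotonicity of $H_{J,K}(A)$ (and of membership in $\sJ(A\vert R)$, to rule out the $\I$ alternative), and part (a) as the specialization $\F=\{([m],R)\}$, $\I=\emptyset$, using Proposition~\ref{prop.Hoffman.rel.alt} for the dual expression. You merely make explicit the details the paper leaves implicit (the inclusion $R\subseteq T_R(u)$ for a cone $R$ and the downward/upward monotonicity of $\sJ(A\vert R)$), which is exactly what is needed.
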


\Xomit{
{\color{blue}
There is a simple but important relationship between the  constants $H(A)$ and $H(A|R)$.  Suppose $A\in \R^{m\times n}$ and $b \in A(\R^n) + \R^m_+$.  Consider the system of inequalities
\[
Ax \le b.
\]
For $u \in \R^n$ Proposition~\ref{prop.Hoffman} yields
\[
\dist(u,P_{A}(b)) \le H(A) \cdot \dist(b-Au,\R^m_+).
\]
Next, suppose \[A = \matr{A^1 \\ A^2}, b = \matr{b^1\\b^2}\] with $A^1\in\R^{k\times n}, b^1\in \R^k$ and $A^2\in\R^{(m-k)\times n}, b^2\in \R^{m-k}$ and $u \in R:=\{x\in \R^n: A^2 x \le b^2\}$.  Then
Proposition~\ref{prop.Hoffman.rel} yields
\[
\dist(u,P_{A}(b)) \le H(A^1|R) \cdot \dist(b^1-A^1u,\R^k_+).
\]
Thus Proposition~\ref{prop.Hoffman.rel} also implies that
\[
H(A^1|R) \le H(A)
\]
provided the norms in $\R^m$ and $\R^{k}$ are {\em compatible} in the following sense
\[
A^2 u \le b^2 \Rightarrow \dist(b^1-A^1u,\R^k_+) = \dist(b-Au,\R^m_+).
\]
Furthermore, $H(A^1|R)$ can be arbitrarily smaller as the following example illustrates. 

\begin{example} 
\label{ex.rel.vs.normal}
Let $m= n = 2$, $k=1$, $A= \matr{1 & 0 \\ 0 & \delta}\in \R^{m\times n}$ with $\delta \in (0,1)$, and suppose  $\R^m$ and $\R^n$ are endowed with the canonical Euclidean norms.  Then $H(A) = 1/\delta$.

On the other hand, for any fixed $b_2$ and $R:=\{x\in \R^2: \delta \cdot x_{2} \le b_2\}$ we have $H(A^1|R) = 1$.  That is, for all $b = \matr{b_1\\ b_2}$ and $u\in R$ we have
\[
\dist(u,P_A(b)) \le \dist(b_1-u,\R_+).
\]
\end{example}
}
}
\subsection{The case of inequalities and  equations}
\label{sec.equations.ineq}
Let $A \in \R^{m\times n}, \, C \in \R^{p \times n},$ and $R\subseteq \R^n$ be a reference polyhedron. Consider  systems of the form
\[
\begin{array}{l}
Ax \le b \\
Cx = d\\
x\in R,
\end{array}
\]
where $R$ represents some constraints that are easy to satisfy.

Proposition~\ref{prop.Hoffman.gral.2} below gives a bound analogous to~\eqref{eq:simple_erro} for the distance from a point $u\in R$ to a nonempty polyhedron of the form
\[
\{x\in R: Ax \le b, \, Cx = d\} = P_A(b) \cap C^{-1}(d) \cap R.
\]
For $J\subseteq[m]$ let  $[A,C; J]:\R^n \rightrightarrows \R^m\times \R^p$ be the set-valued mapping defined
by
\[
x \mapsto\{(Ax+s,Cx): s\in \R^m, \, s_J \ge 0 \}.
\]
Let
\[
\sJ(A,C\vert R):=\{(J,K): J \subseteq [m], \, K\in \T(R),\; [A,C;J](K) \text{ is a linear subspace}\},
\]
and
\begin{equation}\label{eq.Hoffman.const.rel}
H(A,C\vert R):=\dmax_{(J,K)\in \sJ(A,C\vert R)}
\dmax_{(y,w)\in \R^m\times C(K) \atop \|(y,w)\|\le 1} \dmin_{x\in K \atop A_Jx \le y_J, Cx = w} \|x\|.
\end{equation}

We have the following more general versions of Proposition~\ref{prop.Hoffman.rel},
Proposition~\ref{prop.Hoffman.rel.alt}, and Corollary~\ref{corol.Hoffman.rel}.

\begin{proposition}\label{prop.Hoffman.gral.2} Let  $R\subseteq \R^n$ be a reference polyhedron, $A \in \R^{m\times
n},$ and $C \in \R^{p \times n}$. Then for all $(b,d) \in \{(Ax+s,Cx): x\in R, s\in \R^m_+\}$ and $u\in R$
\begin{align}\label{eq.Hoffman.gral.2}
\dist(u,P_A(b)\cap C^{-1}(d)\cap R) &\le H(A,C\vert R)\cdot \dist\left((b-Au,d-Cu), \R^m_+\times\{0\}\right)
\end{align}
and this bound is tight: If $H(A,C\vert R) > 0$ then there exist $(b,d) \in \{(Ax+s,Cx): x\in R, s\in \R^m_+\}$ and $u
\in R$ such that
\[
\dist(u,P_A(b)\cap C^{-1}(d)\cap R) = H(A,C\vert R)\cdot \dist\left((b-Au,d-Cu),\R^m_+\times\{0\}\right)
>0.\]
\end{proposition}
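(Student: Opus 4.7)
The plan is to reduce Proposition~\ref{prop.Hoffman.gral.2} to Theorem~\ref{thm.main.gral}, which supplies the analogous Hoffman bound for an arbitrary polyhedral set-valued mapping paired with a reference polyhedron. The natural choice is $\Phi:\R^n\rightrightarrows \R^m\times \R^p$ defined by $\Phi(x):=[A,C;[m]](x) = \{(Ax+s,Cx): s\in \R^m_+\}$. Under this identification one immediately checks that $\Phi^{-1}((b,d))\cap R = P_A(b)\cap C^{-1}(d)\cap R$, that the feasibility hypothesis $(b,d)\in \{(Ax+s,Cx): x\in R, s\in\R^m_+\}$ is precisely $(b,d)\in \Phi(R)$, and that
\[
\dist\bigl((b,d),\,\Phi(u)\bigr) \;=\; \dist\bigl((b-Au,\,d-Cu),\;\R^m_+\times\{0\}\bigr).
\]
Hence the sought inequality~\eqref{eq.Hoffman.gral.2} is exactly the relative Hoffman bound for $\Phi$ and $R$.

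Having made this reduction, I would invoke Theorem~\ref{thm.main.gral} to obtain the sharpest relative Hoffman constant $H(\Phi\vert R)$, expressed as a maximum of canonical local Hoffman constants over pairs $(J,K)$ with $K\in \T(R)$ and with $J\subseteq [m]$ an index set for which the restricted mapping $x\mapsto [A,C;J](x)$ is relatively surjective from $K$ onto its image. This surjectivity is equivalent to $[A,C;J](K)$ being a linear subspace, which is precisely the defining condition of $\sJ(A,C\vert R)$. Moreover, the local constant attached to such a pair in Theorem~\ref{thm.main.gral} works out to
\[
\dmax_{(y,w)\in \R^m\times C(K)\atop \|(y,w)\|\le 1}\;\dmin_{x\in K\atop A_Jx\le y_J,\,Cx=w}\|x\|,
\]
which is exactly the inner quantity appearing in~\eqref{eq.Hoffman.const.rel}. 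Combining these pieces shows $H(\Phi\vert R) = H(A,C\vert R)$, and the bound in Proposition~\ref{prop.Hoffman.gral.2} follows.

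Tightness is inherited in the same fashion from Theorem~\ref{thm.main.gral}: if $H(A,C\vert R)>0$, the outer max is attained at some $(J^\star,K^\star)\in\sJ(A,C\vert R)$ with a unit-norm $(y^\star,w^\star)\in \R^m\times C(K^\star)$ and a minimizer $x^\star\in K^\star$ with $A_{J^\star}x^\star\le y^\star_{J^\star}$, $Cx^\star=w^\star$, and $\|x^\star\|=H(A,C\vert R)$. Selecting a base point $\bar x\in R$ whose tangent cone is $K^\star$ and whose active inequality set at $\bar x$ contains $J^\star$, and then setting $u:=\bar x$ and $(b,d):=(A\bar x + y^\star_+,\,C\bar x + w^\star)$ (with the inactive coordinates of $b$ padded to keep $(b,d)\in \Phi(R)$), produces an instance in which~\eqref{eq.Hoffman.gral.2} holds with equality.

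The main obstacle is pinning down the dictionary between the abstract collection arising in Theorem~\ref{thm.main.gral} and the concrete index set $\sJ(A,C\vert R)$: one must verify that the ``linear image'' requirement $[A,C;J](K)=\text{subspace}$ is the correct relative surjectivity condition for $\Phi$ restricted to $K$, and that the minimum over $\{x\in K: A_Jx\le y_J,\,Cx=w\}$ in the local constant agrees with the construction used in the general theorem. Once this correspondence is established, the remainder is bookkeeping that will be carried out explicitly in Section~\ref{sec.proof.props}.
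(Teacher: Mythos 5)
Your reduction has the right spirit but, as written, it leans on a theorem the paper does not have. You take $\Phi(x)=\{(Ax+s,Cx):s\in\R^m_+\}$ defined on all of $\R^n$ and then invoke Theorem~\ref{thm.main.gral} ``to obtain the sharpest relative Hoffman constant $H(\Phi\vert R)$.'' But Theorem~\ref{thm.main.gral} is not a relative statement: it has no reference-polyhedron parameter, and it bounds $\dist(u,\Phi^{-1}(b,d))$, not $\dist(u,\Phi^{-1}(b,d)\cap R)$, in terms of $\H(\Phi)=\max_{T\in\S(\graph(\Phi))}\|\Phi_T^{-1}\|$. With your choice of $\Phi$ one has $\dom(\Phi)=\R^n$ and $\graph(\Phi)=\{(x,Ax+s,Cx):x\in\R^n,\ s\ge 0\}$, whose tangent cones never see $R$; the pairs $(J,K)$ with $K\in\T(R)$ simply do not arise, so the constant produced would be $H(A,C\vert\R^n)$ rather than $H(A,C\vert R)$, and the intersection with $R$ on the left-hand side is not controlled. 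The missing idea is precisely how $R$ enters: the paper defines $\Phi$ by~\eqref{eq.def.phi}, i.e.\ $\Phi(x)=\{(Ax+s,Cx):s\ge 0\}$ for $x\in R$ and $\Phi(x)=\emptyset$ otherwise. Then $\graph(\Phi)$ is a polyhedron, $\dom(\Phi)=R$, $\Image(\Phi)=\{(Ax+s,Cx):x\in R,\ s\in\R^m_+\}$, $\Phi^{-1}(b,d)=P_A(b)\cap C^{-1}(d)\cap R$, and the tangent cones of $\graph(\Phi)$ are exactly the cones $T_{J,K}=\{(x,Ax+s,Cx):x\in K,\ s_J\ge 0\}$ with $K\in\T(R)$, so that $\S(\graph(\Phi))=\{T_{J,K}:(J,K)\in\sJ(A,C\vert R)\}$ and $\H(\Phi)=H(A,C\vert R)$. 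With that $\Phi$, the plain Theorem~\ref{thm.main.gral} gives both the inequality and its tightness.

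A secondary point: your tightness argument attempts to rebuild a tight instance by hand (choosing $\bar x$ with prescribed tangent cone and padding $b$), which is both unnecessary and delicate — it essentially re-proves a sketch of the tightness half of Theorem~\ref{thm.main.gral}, and the padding step $(b,d):=(A\bar x+y^\star_+,C\bar x+w^\star)$ is not justified to yield equality. Once the domain-restricted $\Phi$ is used and $\H(\Phi)=H(A,C\vert R)$ is established, tightness is inherited verbatim from the tightness clause of Theorem~\ref{thm.main.gral}, since the $b$, $u$ it produces lie in $\Image(\Phi)$ and $\dom(\Phi)=R$ respectively.
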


\begin{proposition}\label{prop.Hoffman.gral.alt.2}
Let $R\subseteq \R^n$ be a reference polyhedron, $A \in \R^{m\times n},$ and $C \in \R^{p \times n}$. Then for all
$(J,K) \in \sJ(A,C\vert R)$
\[
\dmax_{(y,w)\in \R^m \times C(K) \atop \|(y,w)\|\le 1} \dmin_{x\in K \atop A_Jx \le y_J, Cx = w} \|x\| =
\frac{1}{\dmin_{v\in \R^J_+, z\in C(K) \atop \|(v,z)\|^* = 1, A_J\transp v + C\transp z-u\in K^*} \|u\|^*}.
\]
In particular
\[
H(A,C\vert R)= \dmax_{(J,K)\in \sJ(A,C\vert R)} \frac{1}{\dmin_{v\in \R^J_+, z\in C(K) \atop \|(v,z)\|^* = 1,
A_J\transp v +C\transp z-u\in K^*} \|u\|^*}.
\]
\end{proposition}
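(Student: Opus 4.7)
The plan is to prove the per-pair identity stated in the first equation, for fixed $(J,K)\in\sJ(A,C\vert R)$, by convex/Lagrangian duality, and then to take the maximum over $(J,K)$ to get the second.  The first step is to unpack the hypothesis $(J,K)\in\sJ(A,C\vert R)$: if $[A,C;J](K)$ is a linear subspace of $\R^m\times\R^p$, then in particular its projection $C(K)$ is a linear subspace of $\R^p$, and a short calculation shows that $\{(A_Jx+s,Cx):x\in K,\,s\in\R^J_+\}=\R^J\times C(K)$.  Hence, for every $(y,w)\in\R^m\times C(K)$, the inner minimum $\varphi(y,w):=\min\{\|x\|:x\in K,\,A_Jx\le y_J,\,Cx=w\}$ is attained and finite, and the associated primal problem satisfies a polyhedral constraint qualification allowing strong duality.

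Next I would dualize $\varphi(y,w)$ with multipliers $v\in\R^J_+$ and $z\in\R^p$.  The inner infimum
\[
\inf_{x\in K}\bigl(\|x\|+\langle A_J\transp v+C\transp z,\,x\rangle\bigr)
\]
equals $0$ when $\langle -(A_J\transp v+C\transp z),x\rangle\le\|x\|$ for every $x\in K$, and equals $-\infty$ otherwise.  A standard polar/separation argument identifies the first condition with the existence of $u\in\R^n$ satisfying $\|u\|^*\le 1$ and $A_J\transp v+C\transp z-u\in K^*$.  Since $\lspan(K)^\perp\subseteq K^*$ and $w\in C(K)$, any component of $z$ orthogonal to $C(K)$ can be absorbed into the $K^*$ term without changing $\langle z,w\rangle$, so the restriction $z\in C(K)$ is harmless.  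Rockafellar's polyhedral duality then yields
\[
\varphi(y,w)=\max\bigl\{-\langle v,y_J\rangle-\langle z,w\rangle\,:\,v\in\R^J_+,\,z\in C(K),\,\|u\|^*\le 1,\,A_J\transp v+C\transp z-u\in K^*\bigr\}.
\]

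The third step is to take the outer maximum over $\|(y,w)\|\le 1$.  Since the dual feasible set does not depend on $(y,w)$, the two maxima can be swapped, and $\max_{\|(y,w)\|\le 1}(-\langle v,y_J\rangle-\langle z,w\rangle)=\|(v,z)\|^*$ (extending $v$ by zeros off $J$).  This gives
\[
\max_{\|(y,w)\|\le 1}\varphi(y,w)=\max\bigl\{\|(v,z)\|^*\,:\,v\in\R^J_+,\,z\in C(K),\,\|u\|^*\le 1,\,A_J\transp v+C\transp z-u\in K^*\bigr\},
\]
which a homogeneity/rescaling argument converts into $1/\dmin\{\|u\|^*:\,v\in\R^J_+,\,z\in C(K),\,\|(v,z)\|^*=1,\,A_J\transp v+C\transp z-u\in K^*\}$ (with the convention $1/0=+\infty$), and maximizing over $(J,K)\in\sJ(A,C\vert R)$ then gives the second identity.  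The main obstacle I anticipate is the verification of strong duality together with the reduction $z\in C(K)$: the primal feasible set typically has empty topological interior, so Slater's condition is unavailable, and the needed constraint qualification must be extracted precisely from the hypothesis $(J,K)\in\sJ(A,C\vert R)$.  A cleaner route, which appears to be the one pursued in Section~\ref{sec.proof}, is to specialize Theorem~\ref{thm.main.gral.alt} for polyhedral set-valued mappings to the mapping $x\mapsto\{(Ax+s,Cx):s\in\R^m,\,s_J\ge 0\}$ restricted to $K$; this absorbs the constraint-qualification bookkeeping into the general framework and leaves only the identification of $K^*$ and $C(K)$ in the dual formula.
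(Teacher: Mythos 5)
Your proposal is correct in outline, but its main body follows a genuinely different route from the paper, while your closing remark correctly identifies the paper's actual proof. The paper's argument for Proposition~\ref{prop.Hoffman.gral.alt.2} is exactly the ``cleaner route'' you sketch at the end: it applies Theorem~\ref{thm.main.gral.alt} to the polyhedral mapping $\Phi$ of \eqref{eq.def.phi}, using the identifications (set up in the proof of Proposition~\ref{prop.Hoffman.gral.2}) $\S(\graph(\Phi))=\{T_{J,K}:(J,K)\in\sJ(A,C\vert R)\}$, $\Image(\Phi_{T_{J,K}})=\R^m\times C(K)$, and $u\in\Phi_{T_{J,K}}^*(v,z)$ iff $v\in\R^J_+$, $v_{J^c}=0$, $A\transp v+C\transp z-u\in K^*$; the ``in particular'' identity is then just the definition \eqref{eq.Hoffman.const.rel}. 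Your direct Lagrangian derivation instead re-proves, in this special case, the duality computation \eqref{primal.Hoffman}--\eqref{eq.norm.Hoffman} that underlies Theorem~\ref{thm.main.gral.alt}; your observations that $(J,K)\in\sJ(A,C\vert R)$ forces $\{(A_Jx+s,Cx):x\in K,\ s\in\R^J_+\}=\R^J\times C(K)$, that polyhedrality of the constraints substitutes for Slater, and that any component of $z$ orthogonal to $C(K)$ can be absorbed because $C\transp z_2$ annihilates $K$ (so $\pm C\transp z_2\in K^*$) are all sound. The one step you state too quickly is the evaluation of the outer maximum: in the proposition $(y,w)$ ranges over the unit ball intersected with $\R^m\times C(K)$, not the full ball (off that subspace the inner minimum is $+\infty$), so after swapping maxima what appears is the norm of $(v,z)$ as a linear functional on the subspace $\R^m\times C(K)$, which is not literally $\|(v,z)\|^*$; passing from this restricted norm to the normalization $\|(v,z)\|^*=1$ with $z\in C(K)$ is exactly the point where the paper invokes relative surjectivity and the projection $\Pi_{\Image(\Phi_T)}$ inside Theorem~\ref{thm.main.gral.alt}, so in your self-contained route you should carry out this reduction explicitly (using again the invariance of the dual feasible set under translations of $z$ by $C(K)^\perp$) rather than assert it. What the paper's route buys is that this bookkeeping is done once, at the level of polyhedral set-valued mappings, so the proof of the proposition reduces to computing $T_{J,K}$, its image, and its upper adjoint---a computation you would still need to perform if you follow your fallback.
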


\begin{corollary}\label{corol.ineq.2}
 Let  $R\subseteq \R^n$ be a reference polyhedron, $A \in \R^{m\times n},$ and $C \in \R^{p \times n}$.

\begin{description}
\item[(a)] If $R$ is a cone and $\{(Ax+s,Cx): x\in R,\; s\in \R^m_+\}$ is a  linear subspace then
\begin{equation}\label{eq.HAC.rel}
H(A,C\vert R) =
\frac{1}{\dmin_{v\in \R^m_+, z\in C(R) \atop \|(v,z)\|^* = 1, A\transp v + C\transp z-u\in R^*} \|u\|^*}.
\end{equation}
\item[(b)]
  Suppose $\F\subseteq \sJ(A,C\vert R)$ and $\I \subseteq 2^{[m]}\times \T(R) \setminus \sJ(A,C\vert R)$
are such that for all $(J,K)\in 2^{[m]} \times \T(R)$ either  $J\subseteq F$ and $T \subseteq K$ for some
$(F,T) \in \F,$
or $I\subseteq J$ and $K\subseteq U$ for some $(I,U) \in \I.$   Then
\[
H(A,C \vert R) =  \dmax_{(J,K) \in \F}  \frac{1}{\dmin_{v\in \R^J_+, z\in C(K) \atop \|(v,z)\|^* = 1, A_J\transp v +
C\transp z-u\in K^*} \|u\|^*}.
\]
\end{description}
\end{corollary}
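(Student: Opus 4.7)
The plan is to deduce (a) as a special case of (b) and to prove (b) along the lines of the proof of Corollary~\ref{corol.sets}(b), working with the partial order $(J,K)\preceq(F,T) \iff J\subseteq F \text{ and } T\subseteq K$. For part (a) I would apply (b) with $\F = \{([m],R)\}$ and $\I = \emptyset$: the hypothesis that $\{(Ax+s,Cx): x\in R,\,s\in \R^m_+\}$ is a linear subspace says exactly that $([m],R)\in \sJ(A,C\vert R)$, and because $R$ is a cone one has $u+tx\in R$ for all $u,x\in R$ and $t>0$, hence $R\subseteq T_R(u)$ for every $u\in R$ and therefore $R\subseteq K$ for every $K\in \T(R)$. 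Thus every $(J,K)\in 2^{[m]}\times \T(R)$ trivially satisfies $J\subseteq[m]$ and $R\subseteq K$, so the covering condition of (b) holds vacuously from $\F$ alone.

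For (b), two monotonicity facts drive the proof. First, I would show that $\sJ(A,C\vert R)$ is downward closed in $\preceq$: if $(F,T)\in \sJ(A,C\vert R)$, $J\subseteq F$, and $T\subseteq K$, then $(J,K)\in \sJ(A,C\vert R)$. The cleanest route is through the Farkas-type characterization that ``$[A,C;J](K)$ is a linear subspace'' is equivalent to the nonexistence of a nonzero $v\in \R^m$ with $v_J\ge 0$, $v_{J^c}=0$, and $A\transp v+C\transp z\in K^*$ for some $z$; any such witness at $(J,K)$ lifts to a witness at $(F,T)$ via $F\setminus J\subseteq J^c$ and the inclusion $K^*\subseteq T^*$, giving the claim by contraposition. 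Second, $H_{J,K}(A,C)\le H_{F,T}(A,C)$ whenever $(J,K)\preceq(F,T)$ and both lie in $\sJ(A,C\vert R)$: I would take an optimal $(v,z,u)$ for $1/H_{J,K}$ from Proposition~\ref{prop.Hoffman.gral.alt.2} and extend $v\in \R^J_+$ to $v'\in \R^F_+$ by zero-padding on $F\setminus J$, noting that $A_F\transp v' = A_J\transp v$, the relevant dual norm is preserved under zero-padding, and $A_F\transp v' + C\transp z - u = A_J\transp v + C\transp z - u \in K^*\subseteq T^*$, so $(v',z,u)$ is feasible for $1/H_{F,T}$ with the same objective $\|u\|^*$.

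Granted these facts, the proof of (b) follows the template of Corollary~\ref{corol.sets}(b): for any $(J,K)\in \sJ(A,C\vert R)$ the second covering alternative would produce $(I,U)\in \I$ with $(I,U)\preceq(J,K)$, and downward closure would then place $(I,U)\in \sJ(A,C\vert R)$, contradicting $\I\cap \sJ(A,C\vert R)=\emptyset$. So the first alternative applies, giving $(F,T)\in \F$ with $(J,K)\preceq(F,T)$ and hence $H_{J,K}\le H_{F,T}$; together with $\F\subseteq \sJ(A,C\vert R)$ this collapses $H(A,C\vert R)$ to $\max_{(F,T)\in \F} H_{F,T}(A,C)$, and Proposition~\ref{prop.Hoffman.gral.alt.2} rewrites each $H_{F,T}$ in the claimed dual form. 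The main technical obstacle is the second monotonicity, because the dual in Proposition~\ref{prop.Hoffman.gral.alt.2} quantifies $z$ over $C(K)$ at $(J,K)$ but over $C(T)$ at $(F,T)$ with $C(T)\subseteq C(K)$: the zero-extension of $v$ does not touch $z$, so one must either verify that the optimum in the $(J,K)$-problem can always be realized with $z\in C(T)$, or deduce the inequality directly from the primal formulation using the inclusions $[A,C;F](T)\subseteq [A,C;J](K)$ and $T\subseteq K$.
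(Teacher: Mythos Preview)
Your overall plan matches the paper's template (the paper leaves this corollary unproved but follows the pattern of Corollary~\ref{corol.sets}), and your reduction of (a) to (b) via $\F=\{([m],R)\}$, $\I=\emptyset$ is correct. However, both ``monotonicity facts'' for (b) have real gaps as you have argued them. Your Farkas-type characterization of membership in $\sJ(A,C\vert R)$ is incomplete: the dual cone of $[A,C;J](K)$ is $\{(v,z): v_J\ge 0,\, v_{J^c}=0,\, A\transp v + C\transp z\in K^*\}$, and for this to be a linear subspace one needs not only $v=0$ for every such $(v,z)$ but also that $C\transp z$ lies in the lineality space $K^*\cap(-K^*)$. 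Your lifting argument handles only the obstruction coming from a nonzero $v$; when the failure at $(J,K)$ is witnessed by a pair $(0,z)$ with $C\transp z\in K^*\setminus(-K^*)$, lifting to $(F,T)$ only yields $-C\transp z\in T^*$, and since $K^*\subseteq T^*$ this does not give $-C\transp z\in K^*$. The $C(K)$ versus $C(T)$ obstacle you flag for $H_{J,K}\le H_{F,T}$ is likewise genuine, and neither of your proposed fixes resolves it: the dual zero-padding leaves $z$ in $C(K)$, and the primal inclusion gives the pointwise inequality only for $(y,w)$ with $w\in C(T)$, while $H_{J,K}$ maximizes over the larger range $w\in C(K)$.

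Both gaps dissolve once you use one structural fact about tangent cones of a fixed polyhedron: if $T,K\in\T(R)$ and $T\subseteq K$, then $K$ is itself a tangent cone of $T$ (concretely, $K=T_T(u'-u)$ whenever $T=T_R(u)$ and $K=T_R(u')$). Consequently $C(K)=C(T_T(w))=T_{C(T)}(Cw)$ for a suitable $w$, and since $(F,T)\in\sJ(A,C\vert R)$ forces $C(T)$ to be a linear subspace, every tangent cone of $C(T)$ equals $C(T)$; hence $C(K)=C(T)$. Applying the same reasoning to $\graph(\Phi)$ in place of $R$ (using the identification $T_{J,K}\in\T(\graph(\Phi))$ from Section~\ref{sec.proof.props}) gives the stronger conclusion $[A,C;J](K)=[A,C;F](T)$ outright. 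With the images equal, downward closure of $\sJ(A,C\vert R)$ is immediate, and your primal monotonicity argument goes through verbatim because the outer maxima in $H_{J,K}$ and $H_{F,T}$ now range over the same set $\R^m\times C(T)$.
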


The constant $H(A,C\vert R)$ generalizes the previous constants $H(A)$, $H(A\vert R)$.
More precisely, by taking $p=0$ and $C = [\;]$ the ``empty'' $0\times n$ matrix, we get
\[
H(A,[\;]\,\vert\, R) = H(A\vert R).
\]
If $C = [\;]$ and $R = \R^n$ then we get $H(A,[\;]\,\vert\, \R^n) = H(A)$.

\medskip

Therefore Proposition~\ref{prop.Hoffman} and   Proposition~\ref{prop.Hoffman.rel} are special cases of Proposition~\ref{prop.Hoffman.gral.2}.  Likewise, 
Proposition~\ref{prop.Hoffman.alt} and  Proposition~\ref{prop.Hoffman.rel.alt} are special cases of Proposition~\ref{prop.Hoffman.gral.alt.2}.
We present the proofs of Proposition~\ref{prop.Hoffman.gral.2} and Proposition~\ref{prop.Hoffman.gral.alt.2}
in Section~\ref{sec.proof.props}.  They are immediate consequences of the more general Theorem~\ref{thm.main.gral} and
Theorem~\ref{thm.main.gral.alt} for polyhedral sublinear mappings.

\medskip

Another special and particularly interesting case occurs when $m = 0$ and $A=[\;]$.  This concerns systems of the
form
\[
\begin{array}{l}
Cx = d \\
x\in R,
\end{array}
\]
where $R$ represents some polyhedral constraints that are easy to satisfy.  In this case Proposition~\ref{prop.Hoffman.gral.2}
implies that for all $d\in C(R)$ and $x\in R$
\begin{equation}\label{eq.equations.rel}
\dist(x,C^{-1}(d)\cap R) \le 
\tilde H(C \, \vert \,R) \cdot \|d-Cx\|,
\end{equation}
where
\[
\tilde H(C\,\vert \,R) = H([\;],C \, \vert \,R) =  \dmax_{K\in \tilde\sJ(C \vert R)}
\dmax_{w\in C(K) \atop \|w\|\le 1} \dmin_{x\in K \atop Cx = w} \|x\|,
\]
and
\[
\tilde\sJ(C\, \vert \,R) =\sJ([\;],C\, \vert \,R) = \{K\in \T(R): C(K) \text{ is a linear subspace}\}.
\]
Proposition~\ref{prop.Hoffman.gral.2} also implies that the bound~\eqref{eq.equations.rel} is tight.  Furthermore,
Proposition~\ref{prop.Hoffman.gral.alt.2} yields
\[
\tilde H(C\vert R) = \dmax_{K\in \tilde\sJ(C \vert R)} \frac{1}{\dmin_{z\in C(K),\|z\|^* = 1 \atop  C\transp z-u\in K^*}\|u\|^*}.
\]
In addition, Corollary~\ref{corol.ineq.2} implies that if $R$ is a cone and $C(R)$ is a linear subspace then
\begin{equation}\label{eq.HA.eqns.equal}
\tilde H(C\, \vert R) = \max_{w\in C(R) \atop \|z\|\le 1} \min_{x\in R\atop Cx = w} \|x\| =
\frac{1}{\dmin_{z\in C(R),\|z\|^*=1\atop C\transp z -u \in R^*} \|u\|^*}.
\end{equation}

In a nice analogy to~\eqref{eq.HA} in Corollary~\ref{corol.sets}, the identity~\eqref{eq.HA.eqns.equal} has the following
geometric interpretation.  If $R$ is a cone then  $L := C(R)$ is a linear subspace if and only if
$Cx \in \ri(R)$ is feasible and when this is the case~\eqref{eq.HA.eqns.equal} implies that
\[
\frac{1}{\tilde H( C\, \vert \,R)} = \max\{r: w\in L, \|w\| \le r \Rightarrow y \in C(\B\cap R)\},
\]
where $\B:=\{x\in \R^n: \|x\|\le 1\}$.  In other words, $1/\tilde H(C \, \vert \,R)$ is the radius of the largest ball
in $L$ centered at the origin and contained in $L \cap C(\B\cap R)$.  This radius can be seen as a  generalization of the
smallest singular value of $C$. Indeed, observe that when $R= \R^n$ and both $\R^n$ and $\R^m$ are endowed with
Euclidean norms, $1/\tilde H(C \, \vert \,\R^n)$ is the smallest positive singular value of $C$.  In the special case
when $R$ is a cone and $C(R) = \R^m$, the quantity $1/\tilde H(C \, \vert \,R)$ equals the {\em distance to
non-surjectivity} of the mapping
\[
x \mapsto\left\{\begin{array}{ll}Cx & \text{ if } x\in R \\ \emptyset & \text{otherwise}\end{array}\right.
\]
as detailed in~\cite{Lewi99}.  This distance to non-surjectivity is the same as Renegar's {\em distance to
ill-posedness} of the system of constraints $Cx = 0, \, x\in \ri(R)$ defined by $C$ and $\ri(R)$.

\section{Computing Hoffman constants}
\label{sec.algo}

We next describe some algorithmic approaches to compute Hoffman constants.  For ease of exposition, we focus on the computation of $H(A)$ but the approaches described below
can be extended to compute or estimate more general relative Hoffman constants $H(A,C\vert R)$.

We describe two main approaches to compute $H(A)$.  The first approach is based on a formulation of $H(A)$ as a mathematical program with linear complementarity constraints (MPLCC).  The MPLCC formulation in turn can be rewritten as a mixed integer linear program or as a linear program with special order set constraints of type 1.  The second approach is based on identifying collections of sets that satisfy a certain {\em covering property} based on 
Corollary~\ref{corol.sets}.  

Throughout this section we assume that $A\ne 0$ as otherwise the computation of $H(A)$ is uninteresting.

\subsection{MPLCC formulation of $H(A)$}
\label{sec:numSolvers}
The next proposition shows that $H(A)$ can be formulated as an MPLCC.  

\begin{proposition}\label{prop:LCP} Let $A \in \R^{m\times n} \setminus\{0\}$. Then
\begin{align} \label{eq.LCP}
\begin{split}
 \frac 1{H(A)} = \min_{x,s,v}\; &\|A\transp v \|^*\\
\st\;& Ax - s \le -\1\\
& \|v\|^* = 1\\
& s_i v_i = 0,\; i=1,\dots,m\\
&  s \ge 0, \,v \ge 0.
\end{split}
 \end{align}
\end{proposition}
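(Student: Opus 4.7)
The plan is to reduce the claim to Proposition~\ref{prop.Hoffman.alt}, which gives
\[
\frac{1}{H(A)} \;=\; \min_{J \in \sJ(A)} \; \min_{v\in \R^J_+,\,\|v\|^*=1} \|A_J\transp v\|^*.
\]
Thus it suffices to show that the optimal value of \eqref{eq.LCP} equals the right-hand side above. I would do this via two matching inequalities, using the slack variable $s$ and the complementarity constraint to encode the choice of an index set $J \in \sJ(A)$, while the support of $v$ is forced onto $J$ by complementarity.

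For the direction $(\text{MPLCC value}) \le 1/H(A)$, fix any $J\in \sJ(A)$ (necessarily nonempty since $A\neq 0$) and any $v\in \R^J_+$ with $\|v\|^*=1$. Since $J\in \sJ(A)$, the system $A_J x < 0$ is feasible and, after rescaling, there exists $x\in\R^n$ with $A_J x \le -\1_J$. Extend $v$ to $\bar v\in \R^m$ by $\bar v_i = 0$ for $i \notin J$, and define
\[
s_i = 0 \text{ for } i\in J, \qquad s_i = \max\bigl(0,\,(Ax)_i + 1\bigr) \text{ for } i\notin J.
\]
Then $s\ge 0$, $Ax - s \le -\1$, $\|\bar v\|^*=1$, and $s_i \bar v_i = 0$ for every $i$ (because $\bar v_i = 0$ on $J^c$ and $s_i = 0$ on $J$). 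Hence $(x,s,\bar v)$ is feasible to \eqref{eq.LCP}, and the objective value is $\|A\transp \bar v\|^* = \|A_J\transp v\|^*$. Taking the minimum over admissible $(J,v)$ yields the desired inequality.

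For the reverse direction $(\text{MPLCC value}) \ge 1/H(A)$, take any feasible $(x,s,v)$ for \eqref{eq.LCP} and set $J := \{i : s_i = 0\}$. The constraint $Ax - s \le -\1$ forces $(Ax)_i \le -1$ for $i\in J$, so $A_J x < 0$ and therefore $J \in \sJ(A)$. Complementarity $s_i v_i = 0$ combined with $s_i > 0$ for $i\notin J$ forces $v_i = 0$ off $J$, so $v$ is supported on $J$. Consequently $\|A\transp v\|^* = \|A_J\transp v_J\|^*$ and $\|v_J\|^* = \|v\|^* = 1$, which implies the objective value is at least $\min_{w\in\R^J_+,\,\|w\|^*=1}\|A_J\transp w\|^* \ge 1/H(A)$.

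The only delicate point is the role of the right-hand side $-\1$ (rather than $0$) in the constraint $Ax - s \le -\1$: it is precisely what forces $A_J x < 0$ strictly, and hence guarantees $J\in \sJ(A)$; without it one could not conclude membership in $\sJ(A)$. Attainment is not an issue since $\sJ(A)$ is finite and each inner minimization is over the compact set $\{v\in \R^J_+:\|v\|^*=1\}$ of a continuous function, and the construction in the first direction exhibits a feasible triple achieving the optimal value.
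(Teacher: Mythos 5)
Your proof is correct and takes essentially the same route as the paper's: the paper also rewrites the identity of Proposition~\ref{prop.Hoffman.alt} as $\frac{1}{H(A)}=\min_{J\in\sJ(A)}\min_{v\in\R^J_+,\|v\|^*=1}\|A_J\transp v\|^*$ and matches feasible points of~\eqref{eq.LCP} with pairs $(J,v_J)$ via $J=\{i: s_i=0\}$, exactly as you do, with your two-inequality write-up merely making that correspondence explicit. One tiny quibble: $\emptyset$ always belongs to $\sJ(A)$ (your parenthetical suggests otherwise), but this is harmless since the empty index set contributes $+\infty$ to the inner minimum and, because $A\neq 0$, nonempty sets in $\sJ(A)$ exist.
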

\begin{proof}
This is essentially a restatement of~\eqref{eq.Hoffman.alt}.  Indeed,~\eqref{eq.Hoffman.alt} can be rewritten as
\begin{equation}\label{eq.Hoffman.alt.rephrase}
 \frac 1{H(A)} = \min_{J \in \sJ(A)} \min_{u \in \R^J_+, \, \|u\|^* = 1} \|A_J\transp u \|^*.
 \end{equation}
To establish the equivalence between~\eqref{eq.LCP} and~\eqref{eq.Hoffman.alt.rephrase}, observe that the set of feasible points $(x,s,v)$ for~\eqref{eq.LCP} is in one-to-one correspondence with the set of pairs $(J,u)$ such that $J\in \sJ(A)$ and $u\in \R^J_+, \|u\|^* =1$ via 
\[
J = \{i\in[m]: s_i = 0\} \; \text{ and } \; u = v_J.
\]
Under this correspondence we have $\|A\transp v\|^* = \|A_J\transp u\|^*$ and thus~\eqref{eq.LCP}  is equivalent to~\eqref{eq.Hoffman.alt.rephrase}.
\end{proof}

For a suitable choice of norms, problem \eqref{eq.LCP} can be cast as a linear
program with linear complementarity constraints (LPLCC).  Indeed, suppose $\R^n$ and $\R^m$ are endowed with the $\ell_1$-norm and  $\ell_{\infty}$-norm respectively.  Then $\|A\transp v\|^* = \|A\transp v\|_\infty$ and $\|v\|^* = \|v\|_1$ for all $v\in \R^m$ and hence for this choice of norms
\eqref{eq.LCP} is equivalent to
\begin{align} \label{eq.LPCC}
\begin{split}
\frac 1{H(A)} = \dmin_{x,v,z, t} \; & t\\
\st \; & -t\1  \le A\transp v  \le t\1\\
& Ax - s \le -\1\\
& \1 \transp v = 1\\
& s_i v_i = 0,\; i=1,\dots,m\\
&  s \ge 0, \,v \ge 0
\end{split}
 \end{align}
LPLCC is a large and important class of problems that subsumes linear bilevel optimization and non-convex quadratic programming among others.  There a variety of solution methods for LPLCCs, many of them based on enumerative schemes.  For a detailed review on this subject, see~\cite{Judi12}.  We next describe how~\eqref{eq.LPCC} can also be formulated as a mixed integer linear program and as a linear program with special order set constraints of type 1.

By using big-M constraints, we can  reformulate~\eqref{eq.LPCC} as
the following mixed integer linear program (MILP):
\begin{align}
\label{eq:MILPprob}
\begin{split}
\dmin_{x,v,z, t} \; & t\\
\st \; & A\transp v \le \1t\\
& A\transp v \ge -\1t\\
&Ax \le -z + M(\1-z) \\
& \1\transp v = 1 \\
& 0\le v \le z \\
&z_j \in \{0,1\}, \; \; j=1,\dots,m.
\end{split}
\end{align}
A potential limitation of~\eqref{eq:MILPprob} is the need for an appropriate and valid estimate for the value of $M$.  Modern MILP solvers provide some ways to overcome this limitation. First, state-of-the-art MILP solvers enable the alternate reformulation of the third and sixth constraints in~\eqref{eq:MILPprob} as the following set of {\em indicator} constraints (see~\cite[Chapter 26]{studio2013users}):
\begin{equation}
\label{eq:INDtrick}
\begin{array}{ll}
z_j = 1 \Rightarrow A_{j} x \le -1, & j=1,\dots,m.\\
z_j \in \{0,1\}, & j=1,\dots,m.
\end{array}
\end{equation} 

Another alternative to big-M constraints is to use {\em special order set constraints of type 1} (SOS1) as
discussed in~\cite{pineda2018efficiently, siddiqui2013sos1}.  An SOS1 constraint is a set of variables in which at most one member can be strictly positive.
Problem~\eqref{eq.LPCC} can be reformulated as a the following linear program with SOS1
constraints:
\begin{equation}
\label{eq:MILPsos}
\begin{array}{lllll}
\dmin_{x,v,t,s} \; & t\\
\operatorname{s.t.} \; & -t\1 \le A\transp v \le t\1\\
& Ax - s \le -\1 \\
& \1\transp v = 1 \\
& s \ge 0,  \; v\ge 0 \\
& \{v_j,s_j\} \in \text{SOS1}, & j=1,\dots,m.
\end{array}
\end{equation}

\subsection{Computation of $H(A)$ via 
the {\em covering property}}\label{sec:certificates}

Let $A\in \R^{m\times n}\setminus\{0\}$.  Corollary~\ref{corol.sets} suggests the following algorithmic approach to
compute $H(A)$.  Find $\F \subseteq \sJ(A)$ and $\I \subseteq 2^{[m]} \setminus \sJ(A)$ that satisfy the
following {\em covering property:}

\begin{quote}
For all $J\in 2^{[m]}$ either $J\subseteq F$ for some $F\in \F$ or $I\subseteq J$ for some $I\in \I$.
\end{quote}
Then compute
\begin{equation}\label{eq.Hoffman.small}
H(A):= \max_{J\in\F} \, \frac{1}{\min\{\|A_J\transp v\|^*: v\in \R^J_+, \|v\|^* = 1\}}.
\end{equation}
We choose the term {\em covering property} since the above condition can be alternatively stated as follows: every element $J\in 2^{[m]}$ of the the  ground set $2^{[m]}$ is either ``covered'' by some set in  $\F$ or its complement $[m]\setminus J$ is ``covered'' by the complement of some set in $\I$.  Observe that $(\F,\I) = (\overline \sJ(A),\underline \sJ(A))$ 
satisfies the covering property for the collection
$\overline \sJ(A) \subseteq \sJ(A)$ of maximal (inclusion-wise) sets in $\sJ(A)$ and the collection $\underline \sJ(A)\subseteq 2^{[m]}\setminus \sJ(A)$  of minimal (inclusion-wise) sets in $2^{[m]}\setminus\sJ(A)$.  Furthermore, it is easy to see that if  $(\F,\I)$ satisfies the covering property then $\overline \sJ(A) \subseteq \F$ and $\underline \sJ(A) \subseteq \I$.  In other words, $(\overline \sJ(A),\underline \sJ(A))$ is the minimal pair of collections that satisfies the covering property.


The main challenge in computing $H(A)$ via~\eqref{eq.Hoffman.small} is the identification of suitable collections $\F$ and
$\I$ that satisfy the  covering property.
In some special cases, it is possible to find $\overline \sJ(A)$ analytically and thus compute $H(A)$ via~\eqref{eq.Hoffman.small} with $\F = \overline \sJ(A)$. We illustrate this approach via some examples in
Section~\ref{sec:analytical}. In Section~\ref{sec:procedure} Algorithm~\ref{alg:bb} describes a general procedure  that gradually constructs $\overline\sJ(A)$ and $\underline \sJ(A)$.  We should note that Algorithm~\ref{alg:bb} is only viable when the collections 
$\overline\sJ(A)$ and $\underline \sJ(A)$ are of reasonable size since the algorithm constructs both of these collections explicitly.

The following observation facilitates the computation of $H(A)$.  If $\R^n$ and~$\R^m$ are  endowed with the
$\ell_1$-norm and $\ell_\infty$-norm respectively then
\begin{equation}\label{eq.norm.lp}
\min\{\|A_J\transp v\|^*: v\in \R^J_+, \|v\|^* = 1\}=
\min\{\|A_J\transp v\|_{\infty}: v\in \R^J_+, \1\transp v = 1\}.
\end{equation}
The latter expression is computable via linear programming.  

It is worthwhile noting that although the computation of $H(A)$ depends on the choices of norms, the covering property does not.  In particular, 
if the pair $(\F,\I)$ satisfies the covering property, then $H(A)$ can be computed or estimated for any choice of norms via~\eqref{eq.Hoffman.small} provided that $\min\{\|A_J\transp v\|^*: v\in \R^J_+, \|v\|^* = 1\}$ can be computed or estimated.  We also note that if the pair $(\F,\I)$ satisfies the covering property then it provides a {\em certificate of optimality} for $H(A)$ since it certifies that $H_J(A) \le H(A)$ for any $J\in \sJ(A)$.  By contrast, the MPLCC and MILP approaches described in Section~\ref{sec:numSolvers} do not readily provide such a certificate of optimality for $H(A)$.

\subsubsection{Some examples}\label{sec:analytical}

As the following examples illustrate, when the matrix $A$ is highly structured it may be  possible to
construct $\F$ and $\I$ directly so that the covering property holds.  
In the examples below we actually identify the collection $\overline \sJ(A)$ of maximal sets in $\sJ(A)$.
Once $\overline \sJ(A)$ is identified, we can compute $H(A)$ via~\eqref{eq.Hoffman.small} with $\F = \overline \sJ(A)$.  
To facilitate the latter computation, in the next three examples we assume that $\R^n$ and~$\R^m$ are endowed with the $\ell_1$-norm and $\ell_\infty$-norm respectively so that 
\eqref{eq.norm.lp} holds.

\begin{example}[box] 
\label{ex.box}
Let $n \ge 1$ and consider the matrix
\[
A = \matr{\;\;\Id_n \\ -\Id_n} \in \R^{2n\times n}
\]
where $\Id_n\in\R^{n\times n}$ denotes the $n\times n$ identity matrix.  In this case the collection $\overline \sJ(A)$  consists of the $2^n$ sets of the form
$\{i_1,i_2,\dots,i_n\}$ where $i_k\in \{k,k+n\}, \; k=1,\dots,n$.  A straightforward calculation then shows that
\[
H(A) = \max_{J \in \overline \sJ(A) }  \frac{1}{\dmin_{v\in \R^J_+, \1\transp v=1}\|A_J\transp v\|_\infty} =
 \frac{1}{\dmin_{v\in \R^n_+, \1\transp v=1}\|v\|_\infty} = n.
\]
\end{example}

\begin{example}[simplex]\label{ex.simplex}
Let $n \ge 1$ and consider the matrix
\[
A = \matr{\;\;\Id_n \\ -\1\transp} \in \R^{(n+1)\times n}.
\]
In this case the collection
$\overline\sJ(A)$  consists of the $n+1$ sets $\{1,\dots,n\}$ and $\{1,\dots,n,n+1\}\setminus\{k\}, \; k=1,\dots,n$.  A straightforward calculation then shows that
\begin{align*}
H(A) &= \max\left(
 \frac{1}{\dmin_{v\in \R^n_+, \1\transp v=1}\|v\|_\infty}, \frac{1}{\dmin_{(v,v_n)\in \R^n_+,  \1\transp v + v_n=1}\|(v-v_n\1,-v_n)\|_\infty} \right) \\
 &= \max(n,2n+1)\\
 &=2n+1.
\end{align*}

\end{example}

\begin{example}[$\ell_1$-unit ball]
\label{ex.cube} Let $n \ge 1$ and consider the matrix
$
A \in \R^{2^n \times n}
$
whose rows are the $2^n$ vectors in $\{-1,1\}^n$ ordered lexicographically, that is,
\[
A = \matr{-1 & -1 & \cdots &-1 & -1\\
-1 & -1 & \cdots &-1 & \;\,\;1\\
-1 & -1 & \cdots &\;\,\;1 & -1\\
\;\,\;\vdots & \;\,\;\vdots & \ddots &\;\,\;\vdots&\;\,\;\vdots\\
\;\,\;1 & \;\,\;1 & \cdots &\;\,\;1&-1 \\
\;\,\;1 & \;\,\;1 & \cdots &\;\,\;1&\;\,\;1}.
\]
In this case the collection $\overline\sJ(A)$  consists of the collection of all sets of the form
\[
J_u = \{i: A_iu < 0\}
\]
for $u\in \R^n$ such that all components of $Au$ are non-zero.  
The symmetry of $A$ implies that
\begin{equation}\label{eq.hoffman.red}
H(A) = \max_{J \in \overline\sJ(A) }  \frac{1}{\dmin_{v\in \R^J_+, \1\transp v=1}\|A_J\transp v\|_\infty} = 
\max_{J \in \sJ_0 }  \frac{1}{\dmin_{v\in \R^J_+, \1\transp v=1}\|A_J\transp v\|_\infty},
\end{equation}
where $\sJ_0\subseteq \F$ is the smaller collection of sets of the form $J_u$ where $u\in \R^n$ has non-increasing entries and all entries of $Au$ are non-zero.  For small values of $n$, both $\sJ_0$ and   $H(A)$  can be computed explicitly.  The following table displays the values of $H(A)$ and the optimal index set $J\in \sJ_0$ where~\eqref{eq.hoffman.red} attains its maximum for $n=1,2,3,4,5$.  To ease notation, the $J$ entry in each column only displays the new indices that need to be added to the $J$ entry in the previous column which is denoted by `$\cdots$': 
\[
\begin{array}{c||c|c|c|c|c} n & 1 & 2 & 3 & 4 & 5 \\
\hline \hline
H(A) & 1 & 1 & 3 & 5 & 9 \\
\hline
J & \{1\} & \cdots \cup \{2\} & \cdots \cup\{3,5\} & \cdots \cup\{4,6,7,9\} &   \cdots \cup 
\{ 8,10,11,12,13,   17, 18,    19\}
\end{array}
\]
\end{example}
The values of the Hoffman constant $H(A)$ in Example~\ref{ex.box}, Example~\ref{ex.simplex}, and Example~\ref{ex.cube} can also be obtained via   the MPLCC formulations discussed in Section~\ref{sec:numSolvers} above or via Algorithm~\ref{alg:bb} below.

\subsubsection{An algorithm that constructs $\F$ and $\I$ gradually}\label{sec:procedure}
Algorithm~\ref{alg:bb} below formalizes the following simple iterative procedure to construct a pair $(\F,\I)$ that satisfies the covering property: Start with $\F = \I = \emptyset$.  At each subsequent iteration check whether $(\F,\I)$ covers $2^{[m]}$.  It is does, then we are done.  Otherwise, find $J \in 2^{[m]}$ that is not covered by $(\F,\I)$ and check whether $J\in \sJ(A)$.  If indeed $J\in \sJ(A)$ then add $J$ to $\F$.  Otherwise, find $I \subseteq J$ such that $I\not \in \sJ(A)$ and add $I$ to $\I$.  This procedure must eventually terminate since each iteration 
adds a new element to $\F$ or to $\I$.  Furthermore, as Proposition~\ref{prop.algo} below shows, if this procedure is properly executed, it constructs the minimal collections $\F = \overline \sJ(A)$ and $\I = \underline \sJ(A)$ that satisfy the covering property.

The algorithm needs to perform two main steps.  First, given 
$\F\subseteq \sJ(A)$ and $\I \subseteq 2^{[m]}\setminus \sJ(A)$, find $J \in 2^{[m]}$ not covered by $(\F,\I)$ or verify that no such $J$ exists.   
Second, given $J \in 2^{[m]}$, either certify that $J\in\sJ(A)$ or else find $I\subseteq J$ with $I \in 2^{[m]}\setminus\sJ(A)$.

The first step can be accomplished by solving the following combinatorial optimization problem
\begin{equation}\label{eq.ip}
\begin{array}{rl}
\dmax_J & |J|\\
\st & |J^c\cap I| \ge 1, \; I\in \I\\
& |J\cap F^c| \ge 1, \; F\in \F\\
& J \subseteq [m].
\end{array}
\end{equation}
Observe that $(\F,\I)$ satisfy the covering property if and only if~\eqref{eq.ip} is infeasible.  Otherwise the optimal solution of~\eqref{eq.ip} yields $J\in 2^{[m]}$ of maximal size that is not covered by $(\F,\I)$.

The second step can be accomplished by solving the following optimization problem
\begin{equation}\label{eq.lp}
\min\{\|A_J\transp v\|^*: v\in \R^J_+, \|v\|^* = 1\}.
\end{equation}
Observe that $J\in\sJ(A)$ if and only if the optimal value of~\eqref{eq.lp} is positive.  Otherwise, when the optimal
value of~\eqref{eq.lp} is zero, its optimal solution satisfies $v\in \R^J_+\setminus\{0\}$ and $A_J\transp v = 0$.  In
this case $I(v):=\{i\in J: v_i > 0\} \in 2^{[m]}\setminus \sJ(A)$ and $I(v)\subseteq J$.  Furthermore, for additional efficiency we will assume that in the latter case $v$ is chosen so that $I(v)$ is of minimal size.  This condition is easily enforceable by applying a straightforward post-processing procedure whenever the optimal value of~\eqref{eq.lp} is zero.
As we noted in~\eqref{eq.norm.lp}, when $\R^n$ and~$\R^m$ are  endowed with the
$\ell_1$-norm and $\ell_\infty$-norm respectively problem~\eqref{eq.lp} can be rewritten as a linear program.

%
\begin{algorithm}
  \caption{
   Computation of a pair $(\F ,\I)$ satisfying the covering property and $H(A)$}
\label{alg:bb}
\begin{algorithmic}[1]
\State {\bf input} $A \in \R^{m \times n}\setminus\{0\}$
\State Let $\F := \emptyset, \;\I := \emptyset, H(A):= 0$
\While {$(\F,\I)$ does not satisfy the covering property}
	\State Solve~\eqref{eq.ip} to pick $J \in 2^{[m]}$ not covered by $(\F,\I)$
	\State Let $v$ solve~\eqref{eq.lp} to detect whether $J\in \sJ(A)$
		\If {$\|A_J\transp v\|^* > 0$}
			\State
			$\F := \F \cup \{J\}$  and  $H(A) :=
\max\left\{H(A),\frac{1}{\|A_J\transp v\|^*}\right\}$
		\Else
			\State
			 Let $\I:= \I \cup\{I(v)\}$		
	    \EndIf
		\EndWhile
\State \Return $\F, \, \I, \, H(A)$
\end{algorithmic}
\end{algorithm}

It is easy to see that if $[m]\in \sJ(A)$ then Algorithm~\ref{alg:bb} terminates after one iteration: the
first iteration places $[m]$ in $\F$ and leaves $\I$ empty.  At that point the collections $\F =
\overline \sJ(A) = \{[m]\}$ and  $\I = 
\underline \sJ(A) =
\emptyset$ satisfy the covering property.   Similarly when $[m]$ is the only
set missing from $\sJ(A)$ Algorithm~\ref{alg:bb}  terminates after $1+m$ iterations:  the first iteration places the
set $[m]$  in $\I$ and the next $m$ iterations place the sets $[m]\setminus \{j\}$ for
$j=1,\dots,m$ in $\F$.
At that point the collections $\F  = \overline \sJ(A) = \{[m]\setminus\{j\}: j=1,\dots,m\}$ and $\I = \underline \sJ(A) = \{[m]\}$ satisfy
the covering property.


For general $A\in\R^{m\times n} \setminus\{0\}$, Proposition~\ref{prop.algo} extends the above two special cases.  We should note that, as the above two cases illustrate, the expression $|\overline \sJ(A)| +  \vert\underline \sJ(A)\vert =  \vert\overline \sJ(A)\cup\underline \sJ(A)\vert$ in~\eqref{eq.algo1.bound} below is always smaller, and in some cases vastly smaller, than $2^m$ which is the number of steps that a brute force scan of the subsets of $[m]$ would require. Indeed, observe that for each $J\in \overline \sJ(A) \cup \underline \sJ(A)$ one of the following two situations must occur: {\em either} $J \in 
\overline \sJ(A)$ and hence $\overline \sJ(A) \cup \underline \sJ(A)$ has none of the proper subsets of $J$, {\em or} $J \in 
\underline \sJ(A)$ and hence $\overline \sJ(A) \cup \underline \sJ(A)$ has none of the proper supersets of $J$.
\begin{proposition}\label{prop.algo} Let $A\in \R^{m\times n} \setminus\{0\}$ and let $\overline \sJ(A) \subseteq \sJ(A)$ denote the collection of maximal (inclusion-wise) sets in $\sJ(A)$ and let $\underline \sJ(A) \subseteq 2^{[m]}\setminus\sJ(A)$ denote the collection of minimal (inclusion-wise) sets in  $2^{[m]}\setminus\sJ(A)$.  Then Algorithm~\ref{alg:bb}
terminates after 
\begin{equation}\label{eq.algo1.bound}
|\overline \sJ(A)| +  \vert\underline \sJ(A)\vert
\end{equation} iterations.  Furthermore, upon termination  Algorithm~\ref{alg:bb}  returns $\F = \overline \sJ(A)$ and $\I = \underline \sJ(A)$.
\end{proposition}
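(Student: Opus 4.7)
The plan is to establish two invariants of Algorithm~\ref{alg:bb}: every set appended to $\F$ belongs to $\overline\sJ(A)$, and every set appended to $\I$ belongs to $\underline\sJ(A)$. Once these are in hand, the iteration bound $|\overline\sJ(A)|+|\underline\sJ(A)|$ follows because each iteration strictly enlarges $\F\cup\I$ (the chosen uncovered $J$ is not already in $\F$, since $J\in\F$ would imply $J\subseteq J\in\F$ and hence cover $J$; likewise $I(v)\subseteq J$ is not already in $\I$, or else $I(v)\in\I$ with $I(v)\subseteq J$ would cover $J$). Upon termination $(\F,\I)$ satisfies the covering property, and the remark preceding the proposition gives $\overline\sJ(A)\subseteq\F$ and $\underline\sJ(A)\subseteq\I$, so the two invariants together yield equality and an exact count.

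The first invariant rests on the fact that~\eqref{eq.ip} returns an uncovered $J$ of \emph{maximum} cardinality. Suppose the algorithm places $J$ into $\F$, so $J\in\sJ(A)$. Any strict superset $J'\supsetneq J$ has $|J'|>|J|$, so $J'$ must be covered. Covering via $\F$ would produce $F\in\F$ with $J\subsetneq J'\subseteq F$, contradicting that $J$ is uncovered. Covering via $\I$ would produce $I\in\I\subseteq 2^{[m]}\setminus\sJ(A)$ with $I\subseteq J'$; since $\sJ(A)$ is downward closed under inclusion (if $A_Jx<0$ is feasible then so is $A_{J'}x<0$ for every $J'\subseteq J$, whence $2^{[m]}\setminus\sJ(A)$ is upward closed), this would force $J'\notin\sJ(A)$, contradicting $J'\in\sJ(A)$. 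Hence $J\in\overline\sJ(A)$.

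The second invariant uses the post-processing that selects $v$ minimizing $|I(v)|$ among the optimal solutions of~\eqref{eq.lp} when its optimal value is zero. Assume toward a contradiction that some $I'\subsetneq I(v)$ lies in $2^{[m]}\setminus\sJ(A)$. Then there is $v'\in\R^{I'}_+\setminus\{0\}$ with $A_{I'}^{\transp}v'=0$; extending $v'$ by zeros on $J\setminus I'$ and rescaling so that $\|\cdot\|^*=1$ yields another feasible solution of~\eqref{eq.lp} whose support is exactly $I'$, contradicting size-minimality of $I(v)$. Thus $I(v)\in\underline\sJ(A)$. The main step requiring care is precisely this size-to-inclusion-minimality argument; the other pieces are bookkeeping with the downward closure of $\sJ(A)$ and the definition of the covering property.
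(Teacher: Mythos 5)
Your proof is correct and follows essentially the same route as the paper's: the paper's own (much terser) argument rests on exactly your two invariants — that the maximum-cardinality choice in~\eqref{eq.ip} forces each $J$ added to $\F$ to lie in $\overline\sJ(A)$ and the minimal-support choice in~\eqref{eq.lp} forces each $I(v)$ added to $\I$ to lie in $\underline\sJ(A)$ — and then counts one new element per iteration. You merely spell out the justifications (downward closure of $\sJ(A)$ and the Gordan-certificate extension) that the paper leaves implicit; the only nitpick is that the extended certificate's support need only be contained in $I'$, not equal to it, which does not affect the contradiction.
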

\begin{proof}
Each iteration of Algorithm~\ref{alg:bb} either adds one element $J\in \overline \sJ(A)$ to $\F$ because the set $J$ selected in Step 4 is of maximal size, or adds one element from $I(v) \in \underline \sJ(A)$ to $\I$ because the set $I(v)$ selected in Step 9 is of minimal size.  
To finish, observe that Algorithm~\ref{alg:bb}
terminates as soon as $\F = \overline \sJ(A)$ and $\I = \underline \sJ(A)$.

\end{proof}

The expression~\eqref{eq.algo1.bound} can be interpreted as follows.  Suppose $\overline \sJ(A) = \{J_1,\dots,J_k\}$.  Then $J\in 2^{[m]}\setminus \sJ(A)$ if and only if $J\setminus J_i \ne \emptyset$ for $i=1,\dots,k$.  Therefore $\vert \underline \sJ(A)\vert$ is the number of minimal (componentwise) solutions $(x_1,\dots,x_m) \in \{0,1\}^m$ to the following system of set-covering constraints
\[
\begin{array}{rl}
\dsum_{i\not\in J_\ell} x_i \ge 1 & \text{for} \; \ell=1,\dots,k \\
x_i \in \{0,1\}& \text{for} \; i=1,\dots,m.
\end{array}
\]
Hence  $|\overline \sJ(A)|+\vert \underline \sJ(A)\vert$ will not be too large if the sets in $\overline \sJ(A)$ are few and large. 
 The most favorable case $|\overline \sJ(A)|+\vert \underline \sJ(A)\vert=1$  occurs when $\overline \sJ(A) = \{[m]\}$ or equivalently when $Ax < 0$ is feasible.  The next most favorable case $|\overline \sJ(A)|+\vert \underline \sJ(A)\vert=1+m$ occurs when $\underline \sJ(A) = \{[m]\}$ or equivalently when 
$Ax < 0$ is infeasible but it becomes feasible as soon as we drop one inequality.

\subsection{Numerical experiments}
Table~\ref{tab:real} and Table~\ref{tab:exs} summarize two sets of numerical experiments that illustrate and compare three different methods to compute $H(A)$: the SOS1 formulation~\eqref{eq:MILPsos}, Algorithm~\ref{alg:bb} based on the covering property, and an enumeration scheme based on~\eqref{eq.popular} via a scan of the subsets  $J\subseteq [m]$ such that $A_J$ is full row rank. We will refer to these three methods as SOS1, COVER, and ENUM respectively. To make the implementation of ENUM more efficient, we used a variant of Algorithm~\ref{alg:bb} that scans only for maximal subsets  $J\subseteq [m]$ such that $A_J$ has full row rank.  In particular, the number of sets that ENUM needs to scan is  bounded above by 
$
{m \choose n} +  {m \choose n-1} + \cdots + {m \choose 1}
$. All experiments were carried out in an iMac with a 3.5GHz Intel Core i7 and 32 GB 1600 MHz DDR3 RAM, using {\tt MATLAB R2017b} and {\tt Gurobi 8.1.1}.

Table~\ref{tab:real}  reports results on a set of instances $A\in \R^{m\times n}$ with $n \le m \le 100$ drawn from {\tt LPnetlib, CUTEr,} and {\tt Globallib} test sets.    
For each instance we display the value $H(A)$ and the  following number of ``main steps'' performed by each method.  For SOS1, we display the number of nodes (including the root node) that {\tt Gurobi} generated to solve~\eqref{eq:MILPsos}.  For COVER, we display the number of main iterations.  For ENUM we  display the number of sets $J\subseteq [m]$ that had to be scanned.  The main task required at each main step in each method is the solution of a linear program.  Thus the number of main steps gives a rough comparison among the three methods.  An entry `$\ast$' in Table~\ref{tab:real} indicates that the method did not solve the corresponding instances within 10,000 main steps.

Although we ran over 150 instances, we only display results for the more challenging instances where either SOS1 or COVER require more than one main step.  The entries in Table~\ref{tab:real} (which is ordered by the number of steps taken by the SOS1 method) reveal that many of the instances from the {\tt LPnetlib, CUTEr,} and {\tt Globallib} are easily solved by SOS1 and COVER whereas only the very smallest ones are solved by ENUM. 
The easiest instances (about 120 total, not resported in the Table) are those with $\underline \sJ(A) = \{[m]\}$ which COVER solves in one step and SOS1 generally solves in one or two steps.  By contrast, ENUM cannot solve most of these instances within 10,000 steps.  The first, fifth, and sixth instances in Table~\ref{tab:real} illustrate this behavior.  For other more challenging instances, COVER usually finds $H(A)$ after fewer main steps than SOS1. However, the SOS1 ``steps'' (nodes) are much faster (up to an order of magnitude) than the COVER iterations.  This is not surprising since we used {\tt Gurobi}, a state-of-the-art mathematical programming solver, in our SOS1 implementation.  By contrast, we used fairly straightforward and simple {\tt MATLAB} implementations for both COVER and ENUM.

\begin{table}[!htb]
\begin{center}
\begin{tabular}{lrrcrcrcrrrcrrr}
\toprule
Instance & $m$ & $n$ && \multicolumn{1}{r}{$H(A)$} && \multicolumn{1}{c}{SOS1} &&  \multicolumn{1}{c}{COVER}  &&  \multicolumn{1}{c}{ENUM} \\
\midrule
qp50\_25\_3\_3.mat	&	50	&	25	&&	2.7203	&&	2	&&	1	&&		\multicolumn{1}{r}{$\ast$}	\\
st\_bpv2.mat	&	5	&	4	&&	2.0000	&&	2	&&		3	&&		3	\\
st\_iqpbk1.mat	&	8	&	7	&&	28.0000	&&	2	&&	9	&&		8	\\
st\_fp5 	&	11	&	10	&&	3.2286	&&	6	&&		3	&&		3	\\
qp50\_25\_1\_3 	&	50	&	25	&&	3.5817	&&	8	&&		1	&&	\multicolumn{1}{r}{$\ast$}	\\
qp40\_20\_4\_2 	&	40	&	20	&&	10.4870	&&	10	&&		1	&&	\multicolumn{1}{r}{$\ast$}	\\
st\_e22 	&	5	&	2	&&	11.0000	&&	26	&&		10	&&	10	\\
st\_glmp\_kk92 	&	6	&	4	&&	4.0000	&&	30	&&	11	&&	11	\\
st\_glmp\_fp2 	&	7	&	4	&&	134.3334	&&	38	&& 18	&&		22	\\
st\_e25 	&	8	&	4	&&	173.2708	&&	39	&&		13	&&		70	\\
qp20\_10\_4\_2 	&	20	&	10	&&	36.2748	&&	40	&&		6	&&		\multicolumn{1}{r}{$\ast$}\\
qp30\_15\_1\_1 	&	30	&	15	&&	3411.7169	&&	40	&&	8	&&		\multicolumn{1}{r}{$\ast$}\\
st\_glmp\_ss1 	&	8	&	5	&&	9.0000	&&	58	&&		20	&&		20	\\
st\_glmp\_kky 	&	8	&	7	&&	3.1429	&&	64	&&	20	&&	20	\\
qp20\_10\_3\_1 	&	20	&	10	&&	49.7414	&&	88	&&	15	&&	\multicolumn{1}{r}{$\ast$}\\
st\_ph20 	&	9	&	3	&&	28.0000	&&	108	&&	33	&&	84	\\
st\_ph13 	&	10	&	3	&&	6.0000	&&	118	&&	15	&&	99	\\
st\_qpk2 	&	12	&	6	&&	2.0000	&&	248	&&	70	&& 70	\\
biggsc4 	&	13	&	4	&&	5.0000	&&	420	&&		61	&&		385	\\
qp1 	&	50	&	1	&&	483.3333	&&	1026	&&	501	&&		50\\
qp3 	&	100	&	1	&&	483.3333	&&	1026	&&		551	&&		100\\
st\_qpk3 	&	22	&	11	&&	1.8196	&&	7046	&&	2059 &&	2059\\
lpi\_woodinfe 	&	89	&	35	&&	57.0006	&&	8005	&&		\multicolumn{1}{r}{$\ast$}&&	\multicolumn{1}{r}{$\ast$}\\
qp30\_15\_3\_1 	&	30	&	15	&& 104528.1035	&& 	\multicolumn{1}{r}{$\ast$}	&&		371	&&	\multicolumn{1}{r}{$\ast$}&\\
\bottomrule
\multicolumn{12}{l}{\scriptsize $\ast$: Algorithm reached limit on the number of steps.}\\
\end{tabular}
\end{center}
\caption{Performance of SOS1, COVER, and ENUM on test instances from {\tt LPnetlib, CUTEr, Globallib}, with a limit of 10,000 main steps.}
\label{tab:real}
\end{table}

Table~2 reports results on instances of $A$ generated as in Example~\ref{ex.box} and Example~\ref{ex.cube}.  For each of these two examples we included two instances.  The first one corresponds to the largest $n$ such that the instance was solvable by all three methods within 10,000 steps. The second one 
corresponds to the very next value of $n$.  The general conclusions of these four experiments are that ENUM appears to be the least effective method, while the relationship between the efficiency of SOS1 and COVER is not always the same.  It is noteworthy that Example~\ref{ex.cube} poses a significant challenge even for SOS1 when $n=5$ thereby highlighting the formidable challenge that computing $H(A)$ entails.  We also tested the methods for instances of $A$ generated as in Example~\ref{ex.simplex} for $n$ up to 1000.  Not surprisingly, all three methods solve those instances quite easily: SOS1 usually generates only one node beyond the root node, COVER terminates after $n+2$ iterations, and ENUM terminates after scanning $n+1$ sets.

\begin{table}[!htb]
\begin{center}
\begin{tabular}{lrrcrcrcrrrcrrr}
\toprule
Instance & $m$ & $n$ && \multicolumn{1}{c}{$H(A)$} && \multicolumn{1}{c}{SOS1} &&  \multicolumn{1}{c}{COVER}  &&  \multicolumn{1}{c}{ENUM} \\
\midrule
Example 1	&	26	&	13	&&	13.0000	&&	24	&&		8205		&&		8205& \\
Example 1	&	28	&	14	&&	14.0000	&&	24	&&		$*$		&&	$*$\\
Example 3	&	16	&	4	&&	5.0000	&&	1552	&&		152		&&		1128 \\
Example 3	&	32	&	5	&&	9.0000	&&	\multicolumn{1}{r}{$\ast$}	&&		4594		&&		$*$ \\\bottomrule
\end{tabular}
\end{center}
\caption{Performance of SOS1, COVER, and ENUM on instances of $A$ generated as in Example~\ref{ex.box} and Example~\ref{ex.cube}.}
\label{tab:exs}
\end{table}

We  reiterate that the above results are based on entirely straightforward and simple implementations of SOS1, COVER, and ENUM.  In particular, our basic implementations do not use warm-starts and do not make any attempt to exploit the symmetry structure that is evident in Example~\ref{ex.box}, Example~\ref{ex.simplex}, and Example~\ref{ex.cube}.  The development of more sophisticated implementations of the above algorithmic schemes will be an interesting topic for future work.

\section{A Hoffman constant for polyhedral set-valued mappings}
\label{sec.proof}

We next present a characterization of the Hoffman constant for polyhedral set-valued mappings.
Recall that a set-valued mapping $\Phi: \R^n \rightrightarrows \R^m$ assigns a set $\Phi(x) \subseteq \R^m$ to each
$x\in \R^n.$
Let $\Phi:\R^n \rightrightarrows\R^m$ be a set-valued mapping. The inverse $\Phi^{-1}:\R^m \rightrightarrows \R^n$ of
$\Phi$ is the set-valued mapping defined in the following natural way
\[
x\in \Phi^{-1}(y) \text{ if and only if } y\in \Phi(x).
\]
We will say that $\Phi$ is {\em polyhedral}  if \[\graph(\Phi) := \{(x,y): y\in \Phi(x)\} \subseteq \R^n \times \R^m\]
is a polyhedron. We will say that $\Phi$ is {\em sublinear} if $\graph(\Phi)$ is a convex cone, and we will say that
$\Phi$ is {\em closed} if $\graph(\Phi)$ is closed.  In particular, we will say that $\Phi$ is a {\em polyhedral
sublinear mapping} if  $\graph(\Phi)$ is a polyhedral convex cone.

Let $\Phi:\R^n \rightrightarrows\R^m$ be a set-valued mapping.  The domain and image of $\Phi$ are defined as follows:
\begin{align*}
\dom(\Phi) &= \{x\in \R^n: (x,y) \in \graph(\Phi) \text{ for some } y\in \R^m\},\\
\Image(\Phi) &= \{y\in \R^m: (x,y) \in \graph(\Phi) \text{ for some } x\in \R^n\}.
\end{align*}
When $\Phi:\R^n\rightrightarrows \R^m$ is a sublinear mapping, the norm of
$\Phi$ is defined as follows
\[
\|\Phi\| = \dmax_{x\in \dom(\Phi) \atop \|x\|\le 1} \min_{y\in \Phi(x)} \|y\|.
\]
In particular, if $\Phi:\R^n\rightrightarrows \R^m$ is a sublinear mapping then the norm of
$\Phi^{-1}:\R^m\rightrightarrows \R^n$ is
\[
\|\Phi^{-1}\| = \dmax_{y\in \dom(\Phi^{-1})\atop \|y\|\le 1} \min_{x\in \Phi^{-1}(y)} \|x\|= \dmax_{y\in
\Image(\Phi)\atop \|y\|\le 1} \min_{x\in \Phi^{-1}(y)} \|x\|.
\]
It is easy to see that both $\|\Phi\|$ and $\|\Phi^{-1}\|$ are finite if  $\Phi:\R^n \rightrightarrows\R^m$ is a
polyhedral sublinear mapping.

Suppose $\Phi:\R^n\rightrightarrows \R^m$ is sublinear.  Then the {\em upper adjoint} $\Phi^*:\R^m\rightrightarrows
\R^n$ is defined as follows
\[
u\in \Phi^*(v) \Leftrightarrow \ip{u}{x}\le \ip{v}{y} \text{ for all } (x,y) \in \graph(\Phi) \Leftrightarrow (-u,v) \in \graph(\Phi)^*.
\]
We will rely on the following correspondence between polyhedral sublinear mappings and polyhedral cones.  By
definition, $\Phi:\R^n \rightrightarrows \R^m$ is a polyhedral sublinear mapping if $\graph(\Phi) \subseteq \R^n\times
\R^m$ is a polyhedral cone.  Conversely, if $K\subseteq \R^n\times \R^m$ is a polyhedral cone, then the set-valued
mapping $\Phi_K:\R^n \rightrightarrows \R^m$ defined via
\[
y\in \Phi_K(x) \Leftrightarrow (x,y) \in K
\]
is a polyhedral sublinear mapping with $\graph(\Phi_K) = K$.

A polyhedral set-valued mapping $\Phi: \R^n \rightrightarrows \R^m$ is {\em surjective} if $\Image(\Phi) = \R^m.$
More generally, we will say that $\Phi$ is {\em relatively surjective} if $\Image(\Phi)$ is a linear subspace.
Suppose $\Phi:\R^n \rightrightarrows \R^m$ is a polyhedral set-valued mapping. Let $$\S(\graph(\Phi)):=\{T\in
\T(\graph(\Phi)): \Phi_T \text{ is relatively surjective}\},$$ and $$\H(\Phi):=\max_{T\in \S(\graph(\Phi))}
\|\Phi_T^{-1}\|.$$
We have the following general versions of Proposition~\ref{prop.Hoffman}, Proposition~\ref{prop.Hoffman.alt}, and
Corollary~\ref{corol.sets} for polyhedral set-valued mappings.

\begin{theorem}\label{thm.main.gral} Let $\Phi:\R^n \rightrightarrows \R^m$ be a polyhedral set-valued mapping.  Then
for all $b\in \Image(\Phi)$ and $u\in \dom(\Phi)$
\begin{equation}\label{eq.Hoffman.set}
\dist(u,\Phi^{-1}(b))\le \H(\Phi)\cdot \dist(b,\Phi(u)).
\end{equation}
Furthermore, this bound is tight: If $\H(\Phi)>0$ then there exist $b\in \Image(\Phi)$ and $u\in \dom(\Phi)$ such that
\[
\dist(u,\Phi^{-1}(b)) = \H(\Phi)\cdot \dist(b,\Phi(u))>0.
\]
\end{theorem}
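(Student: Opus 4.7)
My plan is to prove the upper bound by linking the displacement from $u$ to $\Phi^{-1}(b)$ with an appropriately chosen tangent cone $T \in \S(\graph(\Phi))$, and to prove tightness by reversing this construction. For the forward bound, fix $b \in \Image(\Phi)$ and $u \in \dom(\Phi)$. Since $\Phi(u)$ and $\Phi^{-1}(b)$ are nonempty polyhedra, pick $y \in \Phi(u)$ attaining $\|b - y\| = \dist(b, \Phi(u))$ and $\bar x \in \Phi^{-1}(b)$ attaining $\|u - \bar x\| = \dist(u, \Phi^{-1}(b))$. If $y = b$ the bound is trivial, so assume $y \neq b$. The key observation is that $\graph(\Phi)$ is a convex polyhedron containing both $(u, y)$ and $(\bar x, b)$, so the displacement $(u - \bar x, y - b)$ lies in the lineality subspace of the tangent cone at any relative interior point of the minimal face $F \subseteq \graph(\Phi)$ containing both points.

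The heart of the forward bound is to identify a face $F$ such that the tangent cone $T := T_{\graph(\Phi)}(\tilde u, \tilde y)$ at some $(\tilde u, \tilde y) \in \relint(F)$ satisfies both $T \in \S(\graph(\Phi))$ (i.e., $\Phi_T$ is relatively surjective) and that $u - \bar x$ is a minimum-norm element of $\Phi_T^{-1}(y - b)$. With such $T$ in hand, the definition of $\|\Phi_T^{-1}\|$ yields $\|u - \bar x\| \le \|\Phi_T^{-1}\| \cdot \|y - b\| \le \H(\Phi) \cdot \|y - b\|$. The minimum-norm property will be forced by the optimality of $\bar x$: a strictly shorter candidate pre-image would, using the lineality structure of $T$ (which contains $\pm (u - \bar x, y - b)$ in full) together with the convexity of $\graph(\Phi)$, produce a feasible direction from $\bar x$ inside $\Phi^{-1}(b)$ that strictly decreases $\|\cdot - u\|$, a contradiction. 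The delicate part is the selection of $F$: naive choices such as the tangent cones at $(\bar x, b)$ or at $(u, y)$ fail to be in $\S(\graph(\Phi))$ when the corresponding image point sits on the relative boundary of $\Image(\Phi)$, so a dual/KKT analysis at the optimum is required to pin down an $F$ whose tangent cone is simultaneously relatively surjective and aligned with the displacement direction.

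For tightness, assume $\H(\Phi) > 0$ and select $T \in \S(\graph(\Phi))$ attaining $\|\Phi_T^{-1}\| = \H(\Phi)$; the maximum is attained since $\T(\graph(\Phi))$ is finite. By polyhedrality there exist $y_0 \in \Image(\Phi_T)$ with $\|y_0\| = 1$ and $x_0 \in \Phi_T^{-1}(y_0)$ of minimum norm $\|x_0\| = \H(\Phi)$. Writing $T = T_{\graph(\Phi)}(\bar u, \bar y)$ and choosing $\epsilon > 0$ small enough that $(\bar u + \epsilon x_0, \bar y + \epsilon y_0) \in \graph(\Phi)$, I set $u := \bar u$ and $b := \bar y + \epsilon y_0$. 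Direct verification then yields $\dist(b, \Phi(u)) = \epsilon$ with nearest point $\bar y$, and $\dist(u, \Phi^{-1}(b)) = \epsilon \H(\Phi)$ with nearest point $\bar u + \epsilon x_0$, saturating the bound. The principal obstacle is the identification of the face $F$ in the forward bound, especially when $b$ or $y$ lies on the relative boundary of $\Image(\Phi)$ or $\Phi(u)$; the delicate combinatorial/dual analysis needed to pin down the correct tangent cone is the main technical work.
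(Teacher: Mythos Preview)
Your proposal leaves the forward bound incomplete at exactly the point you flag as ``the main technical work'': you never carry out the dual/KKT analysis that would produce a face $F$ whose tangent cone $T$ lies in $\S(\graph(\Phi))$ while still carrying the minimum-norm property. The minimal face containing $(u,y)$ and $(\bar x,b)$ does give a $T$ for which your lineality/optimality argument correctly forces $\bar x-u$ to be a minimum-norm element of $\Phi_T^{-1}(b-y)$, but that $T$ need not be relatively surjective (e.g.\ when $b$ lies on the relative boundary of $\Image(\Phi)$), so the final step $\|\Phi_T^{-1}\|\le\H(\Phi)$ is unjustified. The paper closes this gap differently: it first proves, via convex duality and an extreme-point argument (Lemma~\ref{lemma.rel.surj}), that $\max_{T\in\T(\graph(\Phi))}\|\Phi_T^{-1}\|=\max_{T\in\S(\graph(\Phi))}\|\Phi_T^{-1}\|$, so that $\|\Phi_T^{-1}\|\le\H(\Phi)$ holds for \emph{every} tangent cone. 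With that lemma in hand the paper establishes the forward bound by a mechanism unlike yours---a continuation/contradiction argument that pushes from $(u,v)$ toward $b$ along the fixed direction $d=(b-v)/\|b-v\|$, invoking the lemma at whatever tangent cone is currently encountered---and never singles out one $T\in\S(\graph(\Phi))$. Once Lemma~\ref{lemma.rel.surj} is available, your minimal-face route would in fact finish the forward bound more directly than the paper's continuation; but the lemma is the missing load-bearing piece, and the face-selection you defer would essentially amount to reproving it.

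Your tightness argument matches the paper's, with one correction: $\dist(b,\Phi(u))=\epsilon$ is not obtained by ``direct verification''---nothing a priori prevents $\Phi(\bar u)$ from containing points closer to $b$ than $\bar y$. The paper instead shows $\dist(u,\Phi^{-1}(b))\ge\epsilon\,\H(\Phi)\ge\H(\Phi)\cdot\dist(b,\Phi(u))$ and then invokes the already-proved forward inequality to force equality throughout.
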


\begin{theorem}\label{thm.main.gral.alt}
Let $\Phi:\R^n\rightrightarrows\R^m$ be a  polyhedral set-valued mapping.  Then for all $T\in \S(\graph(\Phi))$
\[
\|\Phi_T^{-1}\| = \dmax_{u\in \Phi_T^*(v)\atop \|u\|^*\le 1} \|\Pi_{\Image(\Phi_T)}(v)\|^* =
\frac{1}
{\dmin_{u\in \Phi_T^*(v)\atop\|\Pi_{\Image(\Phi_T)}(v)\|^*=1}\|u\|^*},
\]
where $\Pi_{\Image(\Phi_T)}:\R^m\rightarrow \Image(\Phi_T)$ denotes the orthogonal projection on the subspace
${\Image(\Phi_T)}$.
In particular,
\[
\H(\Phi) = \max_{T\in \S(\graph(\Phi))}\frac{1}
{\dmin_{u\in \Phi_T^*(v)\atop\|\Pi_{\Image(\Phi_T)}(v)\|^*=1}\|u\|^*}.
\]
\end{theorem}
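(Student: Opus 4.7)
The plan is to dualize the defining inf-sup for $\|\Phi_T^{-1}\|$ in two stages and then rearrange to recover an expression involving $\Phi_T^*$. Fix $T \in \S(\graph(\Phi))$, set $\Psi := \Phi_T$, and let $L := \Image(\Psi)$, which is a linear subspace by hypothesis. By definition,
\[
\|\Psi^{-1}\| \;=\; \sup_{y \in L,\, \|y\| \le 1}\; \inf_{x \in \Psi^{-1}(y)} \|x\|,
\]
and for each $y \in L$ the set $\Psi^{-1}(y)$ is a nonempty polyhedron.

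The first step is to swap the inner min with a max using the dual characterization $\|x\| = \sup_{\|u\|^* \le 1}\ip{u}{x}$. Because $\{u : \|u\|^* \le 1\}$ is compact convex and the pairing is bilinear and continuous, Sion's minimax theorem applies and yields
\[
\inf_{x \in \Psi^{-1}(y)} \|x\| \;=\; \sup_{\|u\|^* \le 1}\; \inf_{x \in \Psi^{-1}(y)} \ip{u}{x}.
\]
The second step is to dualize the inner linear program. Since $\graph(\Psi) = T$ is a polyhedral cone and $y \in L$ guarantees primal feasibility, strong LP duality applies. Writing $T$ in half-space form and identifying the dual constraints with the upper adjoint (recall $u \in \Psi^*(v) \Leftrightarrow (-u, v) \in T^*$), a careful sign analysis, aided by the symmetry $\|y\| = \|-y\|$ of the norm ball, yields
\[
\inf_{x \in \Psi^{-1}(y)} \ip{u}{x} \;=\; \sup\bigl\{\ip{v}{y} : u \in \Psi^*(v)\bigr\}.
\]

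Combining these and swapping the order of the suprema gives
\[
\|\Psi^{-1}\| \;=\; \sup_{u \in \Psi^*(v),\, \|u\|^* \le 1}\; \sup_{y \in L,\, \|y\|\le 1}\, \ip{v}{y}.
\]
Since $L$ is a subspace and $\Pi_L$ is the orthogonal projection onto $L$, for $y \in L$ one has $\ip{v}{y} = \ip{\Pi_L v}{y}$, so the inner sup equals $\|\Pi_L v\|^*$ (interpreted as the dual norm restricted to the subspace $L$). This proves the first claimed equality. The reciprocal form then follows from a standard homogeneity argument: the graph of $\Psi^*$ is a cone, so the constraint $u \in \Psi^*(v)$ is invariant under joint positive rescaling of $(u, v)$, and both $\|u\|^*$ and $\|\Pi_L v\|^*$ are positively homogeneous; scaling to normalize $\|\Pi_L v\|^* = 1$ converts the supremum with $\|u\|^* \le 1$ into the reciprocal of the infimum of $\|u\|^*$ subject to $\|\Pi_L v\|^* = 1$. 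Taking the maximum over $T \in \S(\graph(\Phi))$ then yields the formula for $\H(\Phi)$.

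The step I expect to be the main obstacle is the careful sign bookkeeping in the LP-duality argument, so that the resulting constraint matches the upper adjoint $\Psi^*(v)$ exactly as defined in the paper and not a sign-twisted variant. A related subtle point is that $\|\Pi_L v\|^*$ must be understood as the dual norm of $\Pi_L v$ viewed as an element of $L$, i.e.\ as $\sup_{y \in L,\|y\|\le 1}\ip{\Pi_L v}{y}$, which in general differs from the ambient dual norm of $\Pi_L v$; that this restricted dual norm is what emerges naturally from the computation is the content of the orthogonality observation $\ip{v}{y} = \ip{\Pi_L v}{y}$ for $y \in L$.
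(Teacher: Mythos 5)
Your proposal follows essentially the same route as the paper's proof: the paper also dualizes the minimum-norm problem over the cone $T$ (displays~\eqref{primal.Hoffman}--\eqref{eq.norm.Hoffman}) and then uses relative surjectivity to turn the supremum of a linear functional over $y\in\Image(\Phi_T)$, $\|y\|\le 1$ into $\|\Pi_{\Image(\Phi_T)}(v)\|^*$, with the reciprocal form obtained by homogeneity; your two-stage version (Sion swap to linearize the norm, then LP duality on the cone) is the same computation, and your remark that $\|\Pi_{\Image(\Phi_T)}(v)\|^*$ must be read as $\sup_{y\in\Image(\Phi_T),\,\|y\|\le 1}\ip{v}{y}$ is indeed exactly what the computation (both yours and the paper's) produces. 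One concrete correction, precisely at the step you flagged as delicate: with the paper's convention $u\in\Phi_T^*(v)\Leftrightarrow(-u,v)\in T^*$, your displayed identity $\inf_{x\in\Psi^{-1}(y)}\ip{u}{x}=\sup\{\ip{v}{y}:u\in\Psi^*(v)\}$ is sign-twisted; the correct dual is $\sup\{\ip{v}{y}:(u,-v)\in T^*\}=\sup\{-\ip{v}{y}:-u\in\Psi^*(-v)\}$, in line with the objective $-\ip{v}{y}$ in the paper's~\eqref{dual.Hoffman}. As stated your identity is false pointwise: take $n=m=1$, $T=\{(x,y):y\ge x\}$, $u=1$, $y=-1$; then the left side is $-\infty$ while your right side equals $-1$ (here $\Psi^*(v)=\{v\}$ for $v\ge 0$ and $=\emptyset$ otherwise). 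The slip is ultimately harmless because in the final expression you take suprema over $u$ in the symmetric ball $\|u\|^*\le 1$ and over $y$ in the symmetric set $\Image(\Phi_T)\cap\{\|y\|\le 1\}$, and the substitution $(u,v)\mapsto(-u,-v)$ restores the correct value; but note that this symmetry rescues you only after the outer supremum over $y$, not inside the fixed-$y$ LP duality where you invoke it, so the bookkeeping should be carried through with the $-\ip{v}{y}$ objective (or with $(u,-v)\in T^*$) until that last step.
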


\begin{corollary}\label{corol.phi.sets}
Let $\Phi: \R^n \rightrightarrows \R^m$ be a polyhedral set-valued mapping.

\begin{description}
\item[(a)] If $\Phi$ is sublinear and relatively surjective then
\[\H(\Phi) = \frac{1}{\dmin_{u\in \Phi^*(v)\atop\|\Pi_{\Image(\Phi)}(v)\|^*=1}\|u\|^*}.
\]
\item[(b)]
  Suppose $\mathfrak{F}\subseteq \S(\Phi)$ and $\mathfrak{I} \subseteq \T(\graph(\Phi))\setminus \S(\Phi)$
are such that for all $T \in \T(\graph(\Phi))$ either  $F\subseteq T$ for some $F \in \mathfrak{F},$
or $T\subseteq I$ for some $I \in \mathfrak{I}.$   Then
\[
\H(\Phi) =  \dmax_{T \in \mathfrak{F}}
\|\Phi_T^{-1}\| =  \dmax_{T \in \mathfrak{F}}  \frac{1}
{\dmin_{u\in \Phi_T^*(v)\atop\|\Pi_{\Image(\Phi_T)}(v)\|^*=1}\|u\|^*}.
\]
\end{description}
\end{corollary}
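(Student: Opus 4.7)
The plan is to mirror the proof of Corollary~\ref{corol.sets}: derive (a) as a special case of (b), and then prove (b) by showing that the maximum in the definition of $\H(\Phi)$ can be restricted to the collection $\mathfrak{F}$.

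For (a), I apply (b) with $\mathfrak{F} = \{\graph(\Phi)\}$ and $\mathfrak{I} = \emptyset$. Since $\Phi$ is sublinear, $\graph(\Phi)$ is a polyhedral cone, and $\graph(\Phi)$ is itself a tangent cone of $\graph(\Phi)$ (namely $T_{\graph(\Phi)}(0)$). The relative surjectivity of $\Phi$ says exactly that $\Image(\Phi)$ is a linear subspace, so $\graph(\Phi) \in \S(\graph(\Phi))$. The covering condition holds because for any polyhedral cone $K$ every tangent cone has the form $K + \lspan(F)$ for a face $F$ of $K$ and therefore contains $K$; in particular, $\graph(\Phi) \subseteq T$ for every $T \in \T(\graph(\Phi))$. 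The closed-form expression is then the case $T = \graph(\Phi)$ of Theorem~\ref{thm.main.gral.alt}.

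For (b), the core of the argument is the following two observations about tangent cones of $\graph(\Phi)$. Write each tangent cone as $T = \graph(\Phi) + \lspan(F)$ with $F$ a face of $\graph(\Phi)$, let $\pi_y:\R^n\times\R^m\to\R^m$ denote the projection onto the second factor, and set $L := \lspan(\Image(\Phi))$.

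\emph{Observation 1 (image stability within $\S$).} For every $T \in \S(\graph(\Phi))$, $\Image(\Phi_T) = L$. Indeed, $\Image(\Phi_T) = \pi_y(T) = \Image(\Phi) + \lspan(\pi_y(F))$. Since $F \subseteq \graph(\Phi)$, $\pi_y(F) \subseteq \Image(\Phi)$ and hence $\Image(\Phi_T) \subseteq L$; conversely, $\Image(\Phi_T)$ is by hypothesis a linear subspace containing $\Image(\Phi)$, so it contains $L$.

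\emph{Observation 2 (upward closure of $\S$).} If $T \subseteq T'$ are both in $\T(\graph(\Phi))$ and $T \in \S(\graph(\Phi))$, then $T' \in \S(\graph(\Phi))$. This is immediate from Observation 1 applied to $T$: $\Image(\Phi_{T'}) \supseteq \Image(\Phi_T) = L$, while $\Image(\Phi_{T'}) \subseteq L$ by the same projection argument used in Observation 1; hence $\Image(\Phi_{T'}) = L$ is a subspace.

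Combining these, the key monotonicity follows at once: for $T \subseteq T'$ both in $\S(\graph(\Phi))$, the outer maxima in $\|\Phi_T^{-1}\|$ and $\|\Phi_{T'}^{-1}\|$ are over the same set $\{y \in L : \|y\|\le 1\}$, and the pointwise inclusion $\Phi_T^{-1}(y) \subseteq \Phi_{T'}^{-1}(y)$ gives $\min_{x \in \Phi_T^{-1}(y)}\|x\| \ge \min_{x \in \Phi_{T'}^{-1}(y)}\|x\|$. Therefore $\|\Phi_T^{-1}\| \ge \|\Phi_{T'}^{-1}\|$. With this in hand, the conclusion of (b) is routine. For any $T \in \S(\graph(\Phi))$, the covering hypothesis yields either $F \subseteq T$ for some $F \in \mathfrak{F}$ or $T \subseteq I$ for some $I \in \mathfrak{I}$. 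The second alternative would, by Observation 2, force $I \in \S(\graph(\Phi))$, contradicting $I \in \mathfrak{I} \subseteq \T(\graph(\Phi)) \setminus \S(\graph(\Phi))$. So the first alternative holds, and monotonicity gives $\|\Phi_F^{-1}\| \ge \|\Phi_T^{-1}\|$. Taking the maximum over $T \in \S(\graph(\Phi))$,
\[
\H(\Phi) = \dmax_{T \in \S(\graph(\Phi))} \|\Phi_T^{-1}\| = \dmax_{F \in \mathfrak{F}} \|\Phi_F^{-1}\|,
\]
and the second equality in the statement is Theorem~\ref{thm.main.gral.alt} applied to each $\Phi_F$.

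The main obstacle is recognizing Observation 1, the image-stability property for tangent cones in $\S(\graph(\Phi))$. Once noted, the norm monotonicity falls out cleanly; without it, a direct attack runs into trouble because the outer-max domains $\Image(\Phi_T)$ and $\Image(\Phi_{T'})$ look a priori different and the naive pointwise inequalities do not compose.
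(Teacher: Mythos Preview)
Your overall strategy mirrors the paper's treatment of the analogous Corollary~\ref{corol.sets} (the paper does not spell out a separate proof of Corollary~\ref{corol.phi.sets}): deduce (a) from (b), and prove (b) by showing that every $T\in\S(\graph(\Phi))$ is dominated by some $F\in\mathfrak{F}$ via a monotonicity of $\|\Phi_{(\cdot)}^{-1}\|$. That plan is correct, and you are right that the crux is Observation~1.

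The gap is in your \emph{proof} of Observation~1. You write each tangent cone as $T=\graph(\Phi)+\lspan(F)$ for a face $F$ of $\graph(\Phi)$, but this identity holds only when $\graph(\Phi)$ is a polyhedral \emph{cone}; part~(b) is stated for an arbitrary polyhedral $\Phi$. (For instance, with $P=[0,1]\subseteq\R$ the tangent cone at $0$ is $[0,\infty)$, not $P+\lspan(\{0\})=[0,1]$.) A related casualty is your choice $L=\lspan(\Image(\Phi))$: the correct common image is the direction space $L_0$ of $\affine(\Image(\Phi))$, which differs from $L$ whenever $0\notin\affine(\Image(\Phi))$.

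The fix is short. For $T=T_{\graph(\Phi)}(u,v)$, linearity of the projection $\pi_y$ gives
\[
\Image(\Phi_T)=\pi_y\bigl(\cone(\graph(\Phi)-(u,v))\bigr)=\cone\bigl(\Image(\Phi)-v\bigr)=T_{\Image(\Phi)}(v).
\]
Every tangent cone of the polyhedron $\Image(\Phi)$ is contained in $L_0$ and spans $L_0$, so it is a linear subspace if and only if it equals $L_0$. Hence $\Image(\Phi_T)=L_0$ for every $T\in\S(\graph(\Phi))$, and $\Image(\Phi_{T'})\subseteq L_0$ for every $T'\in\T(\graph(\Phi))$. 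With this replacement your Observations~1 and~2 and the monotonicity argument go through verbatim. Your argument for~(a) is already sound as written, since there $\Phi$ is sublinear and the cone decomposition you use is valid.
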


The proof of Theorem~\ref{thm.main.gral} relies on the following technical lemma.

\begin{lemma}\label{lemma.rel.surj}
Let $\Phi:\R^n \rightrightarrows \R^m$ be a polyhedral set-valued mapping. Then
\[
\H(\Phi)=
\max_{T\in \S(\graph(\Phi))} \|\Phi_T^{-1}\| =\max_{T\in \T(\graph(\Phi))} \|\Phi_T^{-1}\|.
\]
\end{lemma}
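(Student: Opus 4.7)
The first equality is just the definition of $\H(\Phi)$, and the inclusion $\S(\graph(\Phi))\subseteq \T(\graph(\Phi))$ gives $\max_{T\in \S(\graph(\Phi))}\|\Phi_T^{-1}\|\le \max_{T\in \T(\graph(\Phi))}\|\Phi_T^{-1}\|$ for free. The real task is to prove, for every $T\in \T(\graph(\Phi))$, the existence of some $T'\in \S(\graph(\Phi))$ with $\|\Phi_{T'}^{-1}\|\ge \|\Phi_T^{-1}\|$. My plan is to induct on the codimension of the lineality space $L(T):=T\cap(-T)$ inside $\linspan(T)$. The base case is when $T$ is itself a linear subspace: then $\Phi_T$ is a linear map, $\Image(\Phi_T)$ is automatically a subspace, and so $T\in \S(\graph(\Phi))$.

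For the inductive step, assume $T\notin \S(\graph(\Phi))$. Polyhedrality of $T$ guarantees that the sublinear function $y\mapsto \dist(0,\Phi_T^{-1}(y))$ on the polyhedral cone $\Image(\Phi_T)$ attains $\|\Phi_T^{-1}\|$ at some $y^*$ with $\|y^*\|\le 1$, achieved by a minimizer $x^*\in \Phi_T^{-1}(y^*)$ with $\|x^*\|=\|\Phi_T^{-1}\|$. Set $T':=T_T((x^*,y^*))$. The standard polyhedral identity $T_T((x^*,y^*))=T_{\graph(\Phi)}((x_0+tx^*,y_0+ty^*))$ for all sufficiently small $t>0$, where $(x_0,y_0)\in \graph(\Phi)$ realises $T=T_{\graph(\Phi)}((x_0,y_0))$, places $T'$ in $\T(\graph(\Phi))$. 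Since $T$ is a cone and $(x^*,y^*)\in T$, both $T\subseteq T'$ and $-(x^*,y^*)\in T'$ hold, so $(x^*,y^*)\in L(T')$; provided $(x^*,y^*)\notin L(T)$, this yields $\dim L(T')>\dim L(T)$ and the induction hypothesis applied to $T'$ supplies a $T''\in \S(\graph(\Phi))$ with $\|\Phi_{T''}^{-1}\|\ge \|\Phi_{T'}^{-1}\|\ge \|\Phi_T^{-1}\|$.

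The core technical step is the inequality $\|\Phi_{T'}^{-1}\|\ge \|\Phi_T^{-1}\|$. Unfolding $T'=T_T((x^*,y^*))$ using that $T$ is a cone identifies
\[
\Phi_{T'}^{-1}(y^*)=x^*+T_{\Phi_T^{-1}(y^*)}(x^*),
\]
so
\[
\min_{x\in \Phi_{T'}^{-1}(y^*)}\|x\|=\min_{d\in T_{\Phi_T^{-1}(y^*)}(x^*)}\|x^*+d\|.
\]
Optimality of $x^*$ in $\Phi_T^{-1}(y^*)$ gives $\|x^*+\mu d\|\ge \|x^*\|$ for every $d\in T_{\Phi_T^{-1}(y^*)}(x^*)$ and every sufficiently small $\mu\in(0,1]$, and the convex inequality $\|x^*+\mu d\|\le (1-\mu)\|x^*\|+\mu\|x^*+d\|$ bootstraps this to $\|x^*+d\|\ge \|x^*\|$ uniformly in $d$. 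Hence the inner minimum equals $\|x^*\|=\|\Phi_T^{-1}\|$, and since $y^*\in \Image(\Phi_{T'})$ with $\|y^*\|\le 1$, we conclude $\|\Phi_{T'}^{-1}\|\ge \|\Phi_T^{-1}\|$.

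The main obstacle I anticipate is the auxiliary claim that an optimizer with $(x^*,y^*)\notin L(T)$ exists whenever $T\notin \S(\graph(\Phi))$; otherwise $T'=T_T((x^*,y^*))$ collapses back to $T$ and the induction stalls. The plan to handle this is a case split. If there is a maximizer $y^*$ with $-y^*\notin \Image(\Phi_T)$, then $(-x^*,-y^*)\notin T$ for any choice of $x^*$, so automatically $(x^*,y^*)\notin L(T)$. Otherwise every maximizer $y^*$ lies in the lineality subspace of $\Image(\Phi_T)$, and the argument instead replaces $T$ by a strictly larger tangent cone $T''\supsetneq T$ in $\T(\graph(\Phi))$ obtained by perturbing the base point $(x_0,y_0)$ into a higher-dimensional face of $\graph(\Phi)$ before re-running the preceding analysis; finiteness of $\T(\graph(\Phi))$ ensures that this secondary loop terminates and eventually reduces to the favourable case above.
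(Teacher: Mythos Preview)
Your primal strategy—take an optimizer $(x^*,y^*)$, pass to $T'=T_T((x^*,y^*))$, and induct on $\dim\linspan(T)-\dim L(T)$—is a genuinely different route from the paper's. Your identity $\Phi_{T'}^{-1}(y^*)=x^*+T_{\Phi_T^{-1}(y^*)}(x^*)$ and the ensuing bound $\|\Phi_{T'}^{-1}\|\ge\|\Phi_T^{-1}\|$ are correct and clean. The paper instead works on the dual side: from the representation $\|\Phi_T^{-1}\|=\max\{-\ip{v}{y}:\|y\|\le 1,\ y\in\Image(\Phi_T),\ \|u\|^*\le1,\ (u,v)\in T^*\}$ it picks a dual optimizer $(\bar u,\bar v,\bar y)$, sets $\bar T=\bar F^*$ where $\bar F$ is the minimal face of $T^*$ containing $(\bar u,\bar v)$, and shows that if $\bar v$ is chosen as an \emph{extreme point} of $\{v:(\bar u,v)\in T^*,\ \ip{v}{\bar y}=\ip{\bar v}{\bar y}\}$ then $(0,v')\in\bar F\Rightarrow v'=0$, i.e.\ $\Phi_{\bar T}$ is surjective. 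One then reads off $\|\Phi_T^{-1}\|\le\|\bar v\|^*\le\|\Phi_{\bar T}^{-1}\|$ directly from Theorem~\ref{thm.main.gral.alt}, with no induction and no case split.

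Your Case~B, however, is a real gap. The bad case—every optimizer $(x^*,y^*)$ lies in $L(T)$—does occur: with $T=\{(x_1,0,x_1,y_2):y_2\ge 0\}\subseteq\R^2\times\R^2$ and the $\ell_1$ norm on both spaces, $\Image(\Phi_T)=\R\times\R_+$ is not a subspace, yet the only maximizers are $y^*=(\pm1,0)$, forcing $x^*=(\pm1,0)$ and $(x^*,y^*)\in L(T)=\{(a,0,a,0):a\in\R\}$. Your fallback (``perturb the base point into a higher-dimensional face to get $T''\supsetneq T$ and re-run'') is not justified: you give no reason why $\|\Phi_{T''}^{-1}\|\ge\|\Phi_T^{-1}\|$, and for nested tangent cones of a single polyhedron this inequality is false in general—compare any $T$ with $\|\Phi_T^{-1}\|>0$ to the tangent cone $\R^n\times\R^m$ at an interior point of $\graph(\Phi)$, which has $\|\Phi_{\R^n\times\R^m}^{-1}\|=0$. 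So the secondary loop does not close the argument as written. The paper's extreme-point selection on the dual side is precisely what sidesteps this obstruction; to rescue your primal route you would need a specific argument, tied to the hypothesis that all optimizers lie in $L(T)$, showing that the particular enlargement $T''$ you choose preserves the norm bound.
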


\medskip

\begin{proof}[Proof of Theorem~\ref{thm.main.gral}]
To ease notation, throughout this proof let $\G := \graph(\Phi)$.
We will prove the following equivalent statement to~\eqref{eq.Hoffman.set}:
For all $b \in \Image(\Phi)$ and $(u,v)\in \G$
\begin{equation}\label{eq.Hoffman.bound}
\dist(u,\Phi^{-1}(b))\le \H(\Phi)\cdot\|b-v\|.
\end{equation}
Assume $b-v\ne 0$ as otherwise there is nothing to show.
We proceed by contradiction.   Suppose $b \in \Image(\Phi)$ and $(u,v) \in \G$ are such that $b-v\ne 0$ and
\begin{equation}\label{eq.contra}
\vertiii{x-u} > \H(\Phi) \cdot \|b-v\|
\end{equation}
for all $x$ such that $(x,b)\in \G$.
Let
$
d:= \frac{b-v}{\|b-v\|}
$
 and consider the optimization problem
\begin{equation}\label{eq.opt.prob}
\begin{array}{rl}
\displaystyle\max_{w,t} & t \\
\st& (u+w,v+td) \in \G, \\
& \|w\| \le \H(\Phi) \cdot t.
\end{array}
\end{equation}
Since $b\in \Image(\Phi)$ it follows that  
$d = (b-v)/\|b-v\| \in \Image(\Phi_{T})$ for $T:=T_{\G}(u,v) \in \T(\G)$.
Since $\|d\|=1$, the definition of $\|\Phi_{T}^{-1}\|$ and Lemma~\ref{lemma.rel.surj} imply that there exists $(z,d)
\in T$ with $\|z\|\le \|\Phi_{T}^{-1}\| \le \H(\Phi)$.  By the construction of $T = T_{\G}(u,v)$ it follows that for $t > 0$ sufficiently small
$(u+tz,v+td) \in \G$ and so $(w,t) := (tz,t)$ is feasible for problem~\eqref{eq.opt.prob}.  Let
$$S:=\{(w,t) \in \R^n \times \R_+: (w,t) \text{ is feasible for }~\eqref{eq.opt.prob} \}.$$
Assumption~\eqref{eq.contra} implies that $t < \|b-v\|$ for all $(w,t)\in S$.  In addition, since $\G$ is polyhedral,
it follows that $S$ is compact. Therefore~\eqref{eq.opt.prob} has an optimal solution $(\bar w,\bar t)$ with $0<\bar t
< \|b-v\|.$

Let $(u',v'):= (u + \bar w,v+\bar t d) \in \G$.
Consider the modification of~\eqref{eq.opt.prob} obtained by replacing $(u,v)$ with $(u',v')$, namely
\begin{equation}\label{eq.opt.prob.mod}
\begin{array}{rl}
\displaystyle\max_{w' ,t'} & t' \\
\st& (u'+w',v'+t'd) \in \G, \\
& \vertiii{w'} \le \H(\Phi)\cdot t'.
\end{array}
\end{equation}
Observe that
$
b - v' = b-v-\bar t d = (\|b-v\| - \bar t)d \ne 0.
$
Again since $b\in \Image(\Phi)$ it follows that $d= \frac{b-v'}{\|b-v'\|}  \in\Image(\Phi_{T'})$ for
$T':=T_{\G}(u',v')$.  Hence there exists $(z',d)\in T'$ such that  $\|z'\|\le \|\Phi_{T'}^{-1}\| \le \H(\Phi)$.
Therefore,~\eqref{eq.opt.prob.mod} has a feasible point $(w',t') = (t'z',t')$ with $t' > 0$.  In particular
$(u'+w',v'+t'd) = (u+\bar w + w', v + (\bar t + t')d) \in \G$ with $\|\bar w + w'\| \le \|\bar w\| + \|w'\| \le
\H(\Phi) \cdot(\bar t +t')$ and $\bar t + t' >\bar t$.  This contradicts the optimality of $(\bar w,\bar t)$
for~\eqref{eq.opt.prob}.

To show that the bound is tight, suppose $\H(\Phi) = \|\Phi_T^{-1}\| > 0$ for some $T\in \S(\Phi) \subseteq \T(\G)$.
The construction of $\vertiii{\Phi_T^{-1}}$ implies that there exists $d \in \R^m$ with  $\|d\|=1$ such that the
problem
\begin{equation}\label{eq.tight}
\begin{array}{rl}
\displaystyle\min_{z} & \vertiii{z} \\
\st & (z,d) \in T
\end{array}
\end{equation}
is feasible and has an optimal solution $\bar z$ with $\|\bar z\| = \|\Phi_T^{-1} \| = \H(\Phi )>0$.  Let $(u,v)\in
\G$ be such that $T = T_{\G}(u,v)$.   Let $b:=v+td$ where $t > 0$ is small enough so that $(u,v) + t(\bar z,d)\in \G$.
Observe that
$b \in \Image(\Phi)$ and $b - v = t d \ne 0$.
To finish, notice that if $x\in \Phi^{-1}(b)$ then $(x-u,b-v) = (x-u,td) \in T_{\G}(u,v) = T$.  The optimality of
$\bar z$ then implies that
\[
\|x-u\| \ge \H(\Phi) \cdot t = \H(\Phi)\cdot \|b-v\|.
\]
Since this holds for all $x\in \Phi^{-1}(b)$ and $b-v\ne 0$, it follows that
$\dist(u,\Phi^{-1}(b)) \ge \H(\Phi)\cdot \|b-v\| \ge \H(\Phi )\cdot\dist(b,\Phi(u))>0.$
\end{proof}

The proofs of Theorem~\ref{thm.main.gral.alt} and Lemma~\ref{lemma.rel.surj}  rely on the following  convex duality
construction.
Observe that  for a polyhedral convex cone $T\subseteq\R^n \times \R^m$
\[
\begin{array}{rl}
\|\Phi_T^{-1}\| = \dmax_{y} & \vertiii{\Phi_T^{-1}(y)}\\
\st &  y \in \Image(\Phi_T) \\
& \|y\|\le 1,
\end{array}
\]
where
\begin{equation}\label{primal.Hoffman}
\begin{array}{rl}
\vertiii{\Phi_T^{-1}(y)} := \dmin_{x} & \|x\| \\
\st & (x,y) \in T.
\end{array}
\end{equation}
By convex duality it follows that
\begin{equation}\label{dual.Hoffman}
\begin{array}{rl}
\vertiii{\Phi_T^{-1}(y)} =  \dmax_{u,v} & -\ip{v}{y}\\
\st & \|u\|^* \le 1 \\
& (u,v)\in T^*.
\end{array}
\end{equation}
Therefore when $T\subseteq\R^n\times \R^m$ is a polyhedral cone we have
\begin{equation}\label{eq.norm.Hoffman}
\begin{array}{rl}
\vertiii{\Phi_T^{-1}} = \dmax_{u,v,y} &  -\ip{v}{y} \\
\st  &  y \in \Image(\Phi_T)\\
& \|y\|\le 1 \\
& \|u\|^* \le 1\\
& (u,v)\in T^*.
\end{array}
\end{equation}

\begin{proof}[Proof of Theorem~\ref{thm.main.gral.alt}]
Let $T\in \S(\graph(\Phi))$.  Since  $\Phi_T$ is relatively surjective, from~\eqref{eq.norm.Hoffman} it follows that
\[
\begin{array}{rl}
\vertiii{\Phi_T^{-1}} = \dmax_{u,v} &  \|\Pi_{\Image(\Phi_T)}(v)\|^* \\
\st & \|u\|^* \le 1 \\
& u\in \Phi_T^*(v).
\end{array}
\]
The latter quantity is evidently the same as
\[
\dfrac{1}
{\dmin_{u\in \Phi_T^*(v)\atop\|\Pi_{\Image(\Phi_T)}(v)\|^*=1}\|u\|^*}.
\]
\end{proof}

Our proof of Lemma~\ref{lemma.rel.surj} relies on the following equivalence between {\em surjectivity} and {\em non-singularity} of sublinear mappings.
A standard convex separation argument shows that  a closed sublinear mapping $\Phi:\R^n \rightrightarrows \R^m$ is
surjective if and only if
\begin{equation}\label{eq.non.sing}
(0,v) \in \graph(\Phi)^* \Rightarrow v=0.
\end{equation}
Condition~\eqref{eq.non.sing} is a kind of {\em non-singularity} of  $\Phi^*$ as it can be rephrased as $0 \in
\Phi^*(v) \Rightarrow v=0.$

\begin{proof}[Proof of Lemma~\ref{lemma.rel.surj}]
Without loss of generality assume $\affine(\Image(\Phi)) = \R^m$ as otherwise we can work instead with the  modified
mapping   $\Phi_0 : \R^n \rightrightarrows L$ defined via
$
\Phi_0(x) := \Phi(x) - y_0,
$
where $y_0 \in \Image(\Phi)$ and $L$ is the lineality space of $\affine(\Image(\Phi))$, that is,
$L=\affine(\Image(\Phi))-y_0$.

To ease notation let $\G:=\graph(\Phi)$.  We need to show that
\[
\dmax_{T\in \T(\G)}  \|\Phi_T^{-1} \| =
\dmax_{T\in \S(\G)}  \|\Phi_T^{-1} \|.
\]
By construction, it is immediate that
\[
\dmax_{T\in \mathcal T(\G)}  \|\Phi_T^{-1} \| \ge
\dmax_{T\in \S(\G)}  \|\Phi_T^{-1} \|.
\]
To prove the reverse inequality let $T \in \mathcal T(\G)$ be fixed and let $(\bar u,\bar v, \bar y)$ attain the
optimal value $\|\Phi_T^{-1} \|$ in~\eqref{eq.norm.Hoffman}.
Let $\bar F$ be the minimal face of $T^*$ containing $(\bar u, \bar v)$ and $\bar T := \bar F^*  \in \mathcal T(T)
\subseteq \T(\G)$.   As we detail below, $(\bar u,\bar v, \bar y)$ can be chosen so that $\Phi_{\bar T}$ is
surjective.
Since $\|\bar y\|\le 1$ we have
\[
\|\Phi_T^{-1} \| = -\ip{\bar v}{ \bar y} \le \|\bar v\|^*.
\]
Furthermore, $(\bar u,\bar v) \in \bar F \subseteq T^* = \graph(\Phi_{\bar T})^*$ and $\|\bar u\|^* \le 1$, thus
Theorem~\ref{thm.main.gral.alt} yields
\[
\|\Phi_T^{-1} \| \le \|\bar v\|^* \le \|\Phi_{\bar T}^{-1} \|.
\]
Since this holds for any fixed $T \in\mathcal T(\G)$, it follows that
\[
\dmax_{T\in \mathcal T(\G)} \vertiii{\Phi_T^{-1} } \le  \dmax_{\bar T\in \S(\G)}  \|\Phi_{\bar T}^{-1} \|.
\]

It remains to show that $(\bar u,\bar v, \bar y)$ can be chosen so that $\Phi_{\bar T}$ is surjective, where $\bar T =
\bar F^*$ and $\bar F$ is the minimal face of $T^*$ containing $(\bar u,\bar v)$.  To that end, pick a
solution $(\bar u,\bar v,\bar y)$ to~\eqref{eq.norm.Hoffman} and consider the set
$$
V:=\{v \in \R^m: \ip{v}{\bar y} = \ip{\bar v}{\bar y}, \, (\bar u,v)\in T^*\}.$$
In other words, $V$ is the projection of the set of optimal solutions  to~\eqref{eq.norm.Hoffman} of the form $(\bar
u, v, \bar y)$.  Since $T$ is polyhedral, so is $T^*$ and thus $V$ is a polyhedron.
Furthermore, $V$ must have at least one extreme point.  Otherwise there exist $\hat v\in V$ and a nonzero $\tilde v\in
\R^m$ such that $\hat v + t\tilde v \in V$ for all $t \in \R$.   In particular, $(\bar u, \hat v + t\tilde v) \in T^*$
for all $t \in \R$ and thus  $(0,t \tilde v) \in T^*$ for all $t\in\R$.
The latter in turn implies $\Image(\Phi_T) = \{y\in \R^m: (x,y) \in T \text{ for some } x\in \R^n\} \subseteq \{y\in
\R^m: \ip{\tilde v}{y} =0\}$. Since $T\in \T(\G)$, if follows that $\Image(\Phi)) \subseteq\{y\in \R^m: \ip{\tilde
v}{y} =0\}$ and thus
$\affine(\Image(\Phi))\subseteq\{y\in \R^m: \ip{\tilde v}{y} =0\}$ thereby contradicting the assumption
$\affine(\Image(\Phi)) =\R^m$.  By replacing $\bar v$ if necessary, we can assume that
$\bar v$ is an extreme point of $V$. We claim that the minimal face $\bar F$ of $K^*$ containing $(\bar u,\bar v)$
satisfies
\[
(0,v') \in \bar F = \bar T^* \Rightarrow v' = 0
\]
thereby establishing the surjectivity of $\Phi_{\bar T}$ (cf., \eqref{eq.non.sing}).
To prove this claim, proceed by contradiction.  Assume $(0,v') \in \bar F$ for some nonzero $v'\in \R^m$.
The choice of $\bar F$ ensures that $(\bar u,\bar v)$ lies in the relative interior of $\bar F$ and thus for $t>0$
sufficiently small both $(\bar u,\bar v + tv')\in \bar F\subseteq T^*$ and
$(\bar u,\bar v - tv')\in \bar F\subseteq T^*$.  The optimality of $(\bar u,\bar v,\bar y)$  implies that both
$\ip{\bar v+tv'}{\bar y } \ge \ip{\bar v}{ \bar y}$ and $\ip{\bar v -tv'}{\bar y} \ge \ip{\bar v}{ \bar y}$ and so
$\ip{v'}{ \bar y} = 0$.  Thus both $\bar v + tv' \in V$ and $\bar v -tv'\in V$ with $tv'\ne 0$ thereby contradicting
the assumption that $\bar v$ is an extreme point of $V$.
\end{proof}

To conclude this section, we briefly describe how the approach to compute Hoffman constants via the covering property in Section~\ref{sec:certificates} extends to the general context of polyhedral set-valued mappings.    Suppose $\Phi:\R^n\rightrightarrows \R^m$ is a polyhedral set-valued mapping.  Corollary~\ref{corol.phi.sets} suggests the following algorithmic approach to
compute $\H(\Phi)$: Find  $\mathfrak{F}\subseteq \S(\Phi)$ and $\mathfrak{I} \subseteq \T(\graph(\Phi))\setminus \S(\Phi)$ that satisfy the following covering property:

\begin{quote}
For all $T \in \T(\graph(\Phi))$ either  $F\subseteq T$ for some $F \in \mathfrak{F},$
or $T\subseteq I$ for some $I \in \mathfrak{I}.$   
\end{quote}
Then compute
\[
\H(\Phi) =  \dmax_{T \in \mathfrak{F}}
\|\Phi_T^{-1}\| =  \dmax_{T \in \mathfrak{F}}  \frac{1}
{\dmin_{u\in \Phi_T^*(v)\atop\|\Pi_{\Image(\Phi_T)}(v)\|^*=1}\|u\|^*}.
\]

\section{Proofs of propositions in Section~\ref{sec.special}}
\label{sec.proof.props}
We next present the proofs of Proposition~\ref{prop.Hoffman.gral.2} and Proposition~\ref{prop.Hoffman.gral.alt.2}.  As
noted before, the other propositions in Section~\ref{sec.special} follow as special cases of these two results.

\medskip

Let $R\subseteq \R^n, \; A\in \R^{m\times n},$ and $C\in \R^{p\times n}$.
Construct $\Phi:\R^n \rightrightarrows \R^{m+p}$ as follows
\begin{equation}\label{eq.def.phi}
\Phi(x) = \left\{\begin{array}{ll} \{(Ax+s,Cx): s\ge 0\} & \text{ if } x\in R \\ \emptyset & \text{
otherwise.}\end{array}\right.
\end{equation}
Observe that $\Phi$ is polyhedral since $R$ is a polyhedron.

\begin{proof}[Proof of Proposition~\ref{prop.Hoffman.gral.2}] For $\Phi$  as in~\eqref{eq.def.phi} we have
\[
\S(\graph(\Phi)) = \{T_{J,K}: (J,K)\in \sJ(A,C\vert R)\}
\]
where
$ T_{J,K} = \{(x,Ax+s,Cx):x\in K, s_J \ge 0\}
$. Next, observe that for $(J,K)\in \sJ(A,C\vert R)$ we have $(y,w)\in \Phi_{T_{J,K}}(x) \Leftrightarrow x\in K, \, A_Jx\le y_J,$
and $Cx= w$.
Therefore $$\H(\Phi)=\max_{(J,K)\in \sJ(A,C\vert R)} \|\Phi_{T_{J,K}}^{-1}\| =
\dmax_{(J,K)\in \sJ(A,C\vert R)}
\dmax_{(y,w)\in  \R^m \times C(K)\atop \|(y,w)\|\le 1} \dmin_{x\in L \atop A_Jx \le  y_J, Cx = w} \|x\| = H(A,C \vert
R).
$$ Furthermore, $\dom(\Phi) = R$ and $\Image(\Phi) = \{(Ax+s,Cx): x\in R, \, s \ge 0\}$.  Therefore
Theorem~\ref{thm.main.gral} implies that for all $(b,d) \in \Image(\Phi) = \{(Ax+s,Cx): x\in R, \, s \ge 0\}$ and
$u\in \dom(\Phi) = R$
\begin{align*}
\dist(u,P_A(b)\cap C^{-1}(d) \cap R) &= \dist(u,\Phi^{-1}(b,d)) \\ &\le
\H(\Phi) \cdot \dist((b,d), \Phi(u)) \\&= H(A,C\vert R)  \cdot \dist((b-Au,d-Cu),  \R^m_+\times\{0\}).
\end{align*}
Theorem~\ref{thm.main.gral} also implies that this bound is tight.

\end{proof}
\begin{proof}[Proof of Proposition~\ref{prop.Hoffman.gral.alt.2}] Observe that for $(J,K)\in \sJ(A,C\vert R))$ and
$T:=T_{J,K}$ we have $\Image(\Phi_{T}) =  \R^m\times C(K)$ and $u\in \Phi_{T}^*(v,z)$ if and only if
$A\transp v + C\transp z - u \in K^*, \; v_J \ge 0,$ and $v_{J^c} = 0$.  Hence
\[
\dmin_{u \in \Phi_{T}^*(v,z)\atop \|\Pi_{\Image(\Phi_{T})(v,z)\|^* = 1}} \|u\|^* = \dmin_{v\in \R^J_+, z\in
C(K)\atop\|(v,z)\|^* = 1, A_J\transp v + C\transp z-u \in L^*}
\|u\|^*.
\]
To finish, apply Theorem~\ref{thm.main.gral.alt} and the facts  $\S(\graph(\Phi)) = \{T_{J,K}: (J,K)\in \sJ(A,C\vert
R)\}$ and $\H(\Phi) = H(A,C \vert R)$ established in the previous proof.

\end{proof}

\section{Conclusions}

We provide a characterization of the Hoffman constant for a system of linear inequalities and equations relative to a reference polyhedron (Proposition~\ref{prop.Hoffman.gral.2}).  Our characterization is stated as the largest of a finite collection of easily computable Hoffman constants (Proposition~\ref{prop.Hoffman.gral.alt.2}).  

We describe how our characterization can be leveraged to design two classes of algorithmic procedures to compute Hoffman constants.  One of them is based on a formulation of the Hoffman constant as a mathematical program with linear complementarity constraints (Proposition~\ref{prop:LCP}).  The other one is based on a certain type of covering property (Algorithm~\ref{alg:bb}).

We also develop the concept of Hoffman constant and generalize our characterization and covering property to compute it in the more general context of polyhedral set-valued mappings (Theorem~\ref{thm.main.gral} and Theorem~\ref{thm.main.gral.alt}).

\section*{Acknowledgements}

Javier Pe\~na's research has been funded by NSF grant CMMI-1534850.

\bibliographystyle{plain}


\end{document}